\newcommand{\de}{\partial}
\theoremstyle{plain}
\newtheorem{theorem}{Theorem}[section]
\newtheorem{lemma}[theorem]{Lemma}
\theoremstyle{definition}
\newtheorem{defin}[theorem]{Definition}
\newtheorem{remark}[theorem]{Remark}
\newtheorem{example}{Example}
\theoremstyle{remark}
\numberwithin{equation}{section}
\author[F. Della Pietra]{Francesco Della Pietra}
\author[F. Oliva]{Francescantonio Oliva}
\author[S. Segura]{Sergio Segura de Le\'on}
\address{Francesco Della Pietra - Dipartimento di Matematica e Applicazioni ``Renato Caccioppoli'', Universit\`a degli studi di Napoli Federico II, Via Cintia, Monte S. Angelo, 80126 Napoli, Italy}
\email{f.dellapietra@unina.it}
\address{Francescantonio Oliva - Dipartimento di Scienze di Base e Applicate per l'Ingegneria, Sapienza Università di Roma, Via Antonio Scarpa 16, 00161 Roma, Italy}
\email{francescantonio.oliva@uniroma1.it}
\address{Sergio Segura de Le\'on - Departament d'An\`alisi Matem\`atica, Universitat de Val\`encia, Dr. Moliner 50,
	46100 Burjassot, Val\`encia, Spain}
\email{sergio.segura@uv.es}
\keywords{Nonlinear elliptic equations, Robin boundary conditions, Singular lower order term, Entropy solutions, Uniqueness} \subjclass[2010]{35J25, 35J60, 35J70,  35J75, 35A01, 35A02}
\begin{document}

\title{On a nonlinear Robin problem with an absorption term on the boundary and $L^1$ data}

\maketitle

\begin{abstract}
	We deal with existence and uniqueness of nonnegative solutions to
	\begin{equation*}
		\label{pbabstract}
		\begin{cases}
			\displaystyle -\Delta u = f(x) & \text{ in }\Omega,\\
			\displaystyle \frac{\partial u}{\partial \nu} + \lambda(x) u = \frac{g(x)}{u^\eta} & \text{ on } \partial\Omega,
		\end{cases}
	\end{equation*}
	where $\eta\ge 0$ and $f,\lambda$ and $g$ are nonnegative integrable functions. The set $\Omega\subset\mathbb{R}^N (N> 2)$ is open and bounded  with smooth boundary and $\nu$ denotes its unit outward normal vector.
	  \\
	  More generally, we handle equations driven by monotone operators of $p$-Laplacian type jointly with nonlinear boundary conditions. We prove existence of an entropy solution and check that this solution is unique under natural assumptions. Among other features, we study the regularizing effect given to the solution by both the absorption and the nonlinear boundary term.
\end{abstract}

\begin{center}
\begin{minipage}{.65\textwidth}
\tableofcontents
\end{minipage}

\end{center}

\section{Introduction}

In this paper we analyze existence and uniqueness of nonnegative solutions to the following model problem
\begin{equation}
	\label{pbintro}
	\begin{cases}
		\displaystyle -\Delta u = f(x) & \text{ in }\Omega,\\
		\displaystyle \frac{\partial u}{\partial \nu} + \lambda(x) u = \frac{g(x)}{u^\eta} & \text{ on } \partial\Omega,
	\end{cases}
\end{equation}
where $\eta\ge 0$ and $f,\lambda$ and $g$ are nonnegative functions which can be even merely integrable. Here $\Omega$ denotes an open bounded subset of $\mathbb{R}^N$ $(N>2)$ with smooth boundary while $\nu$ denotes its unit outward normal vector.

The main interest in problem \eqref{pbintro} relies on the boundary equation which contains a blowing up term providing a nonlinear Robin boundary condition.

\medskip

Up to our knowledge, problems as \eqref{pbintro} are essentially new to the literature. Singular problems are extremely studied when the singularity is, let say, volumetric and a Dirichlet boundary condition is imposed; this provides to the problem a singular feature.

Let us just cite some historical papers \cite{crt,lm} and also more recent papers \cite{bo,ddo,op} which investigate different aspects of this problem using various techniques.

\medskip

The literature concerning nonlinear boundary conditions is more limited and mainly focused on $u_\nu + F(u)=0$ with $F$ nondecreasing and finite at the origin (\cite{aimt,ams,go,pri}). Apart from these, in \cite{mt} the authors deal with a problem involving a function $F$ blowing up at the origin in a concrete model case. Here it is proven existence and nonnexistence results in presence of subcritical powers in both the interior and the boundary equation. It is also worth to mention papers \cite{gmm,gmp} where the authors consider homogeneous Robin boundary conditions jointly with singular terms even dependant on the gradient of the solution itself when $\lambda$ is a positive constant. In particular they show existence of solutions using variational methods and a sub and super-solution technique. Let also mention that, if $g \equiv 0$ and $\lambda$ is a positive constant, recent results can be found in \cite{ant}. Moreover, if $\eta=0$ and the principal operator is the $p$-Laplacian, one can also refer for instance to \cite{dos} where, among other things, the asymptotic behaviour of the solutions as $p\to 1^+$ is studied.

\medskip

In this paper we first show existence of a weak solution to \eqref{pbintro} when the data are regular enough (see Theorem \ref{teoexreg} below). Here, through a suitable regularization process on the data involved, we exploit a comparison argument as well as the classical Hopf Lemma in order to deduce that the approximation sequence is bounded from below on $\partial\Omega$. Roughly speaking, this gives to the problem a non-singular feature which allows to easily pass to the limit the approximation sequence. Let explicitly stress that we strongly need $\lambda$ to be bounded and not null in the previous argument; in particular this permits to deduce that the sub-solution to the approximation sequence given by \eqref{pbsottosol} is actually positive on the boundary of the domain.

Let also mention that the regularizing effect given by the singular boundary term is expressed by condition \eqref{erresecreg} below. If $\eta>0$ we obtain finite energy solutions for a larger class of data; for instance, if $\eta\ge 1$ we need $g$ to be just an integrable function to have the solution $u\in W^{1,p}(\Omega)$. This effect is due to the degeneration at infinity of the nonlinear boundary term.

\medskip

In the second part of the paper we deal with a generalization of problem \eqref{pbintro} given by

\begin{equation}
	\label{pbmaingeneralintro}
	\begin{cases}
		\displaystyle -\operatorname{div}(a(x,\nabla u)) = f & \text{ in }\Omega,\\
		u\ge 0 & \text{ in }\Omega,\\
		\displaystyle a(x,\nabla u)\cdot \nu + \lambda \sigma(u) = h(u)g & \text{ on } \partial\Omega,
	\end{cases}
\end{equation}

where $a$, $\sigma$ and $h$ are suitable generalizations of the functions involved in \eqref{pbintro} satisfying assumptions \eqref{cara1},\eqref{cara2}, \eqref{cara3}, \eqref{k}, \eqref{klim} and \eqref{sigma}  below. Finally $\lambda$ and $g$ are merely nonnegative integrable functions on $\partial \Omega$ as well as $0\le f\in L^1(\Omega)$.

As it is clear problem \eqref{pbmaingeneralintro} is non-variational and here the approximation process is strongly needed to show the existence of entropy solutions (see Definition \ref{def_ent} below).

In this case we need a totally different strategy with respect to the one of Theorem \ref{teoexreg}: indeed we can not show that the approximation sequence is bounded from below on $\partial\Omega$ since, among other things, $\lambda$ is actually unbounded. Here we take advantage of suitable test functions to control the nonlinear (and possibly singular) boundary term.

Let finally stress that Theorem \ref{teouniqueent} below shows that the entropy solution to \eqref{pbmaingeneralintro} is unique under natural monotonicity assumptions on the involved functions.

\medskip

The plan of the paper is the following: in Section \ref{sec_reg} we deal with existence of a weak solution to \eqref{pbintro}. In Section \ref{sec_entropy} we prove existence and uniqueness of entropy solutions to a generalization of \eqref{pbmaingeneralintro} and we finally prove the uniqueness theorem.

\subsection{Notation and preliminaries}
\label{not}
For the entire paper $\Omega$ is an open bounded set of $\mathbb{R}^N$ ($N\ge 2$) with regular boundary.
\\
For a given function $v$ we denote by $v^+=\max(v,0)$ and by $v^-= -\min (v,0)$. Moreover $\chi_{E}$ denotes the characteristic function of a set $E$. For a fixed $k>0$, we define the truncation function $T_{k}:\mathbb{R}\to\mathbb{R}$ 
as
\begin{align*}
	T_k(s):=&\max (-k,\min (s,k)).\\
\end{align*}
We will also use the functions
\begin{align}\label{Vdelta}
	\displaystyle
	V_{\delta}(s):=
	\begin{cases}
		1 \ \ &s\le \delta, \\
		\displaystyle\frac{2\delta-s}{\delta} \ \ &\delta <s< 2\delta, \\
		0 \ \ &s\ge 2\delta,
	\end{cases}
\end{align}
and
\begin{align}\label{phit}
	\displaystyle
	\phi_{t,\varepsilon}(s):=
	\begin{cases}
		0 \ \ &s\le t, \\
		\displaystyle\frac{s-t}{\varepsilon} \ \ &t <s< t+\varepsilon, \\
		1 \ \ &s\ge t+\varepsilon.
	\end{cases}
\end{align}

Fixed a nonnegative $\lambda\in L^{\frac{N-1}{p-1}}(\Omega)$ (not identically null), we consider in $W^{1,p}(\Omega)$ the norm defined by
\begin{equation*}\label{norma}
	\| v\|^p_{\lambda,p}=\int_\Omega |\nabla v|^p+\int_{\partial\Omega}\lambda |v|^p d \mathcal{H}^{N-1}\qquad v\in W^{1,p}(\Omega)\,.
\end{equation*}
This norm turns out to be equivalent in $W^{1,p}(\Omega)$ to the usual norm (see \cite[Section 2.7]{N}). As a consequence, classical embeddings that hold for $W^{1,p}(\Omega)$ can be translated to this norm.

Let also recall the following well known trace inequality (see \cite[Theorem $4.2$]{N}). There exists $C>0$ such that:
\begin{equation}\label{trace}
	\|v\|_{L^{\frac{(N-1)p}{N-p}}(\partial\Omega)} \le C\|v\|_{W^{1,p}(\Omega)}, \ \forall v \in W^{1,p}(\Omega).
\end{equation}
It is worth mentioning that the previous immersion is also compact in $L^q(\partial\Omega)$ if $q<\frac{(N-1)p}{N-p}$ (see \cite[Theorem $6.1$]{N}).

For any $0<r<\infty$, by $M^r(\Omega)$ we denote the usual Marcinkiewicz (or weak Lebesgue) space of index $r$, which is the space of functions $f$
such that $|\{|f|>t\}|\le C t^{-r}$, for any $t>0$. Let only recall that, if $|\Omega|<\infty$, $L^{r}(\Omega)\subset M^r(\Omega)\subset L^{r-\varepsilon}(\Omega)$, for any $\varepsilon>0$. For an overview to these spaces we refer to \cite{hunt}.

\medskip

If no otherwise specified, we will denote by $C$ several positive constants whose value may change from line to line and, sometimes, on the same line. These values will only depend on the data but they will never depend on the indexes of the sequences we will introduce.
\\
Finally we underline that, if no ambiguity occurs,  we will often use the following notation for the Lebesgue integral of a function $f$
$$
 \int_\Omega f:=\int_\Omega f(x)\ dx.
$$

\section{The case with regular data}
\label{sec_reg}
In this section, under the assumption of $\Omega$ bounded open set with $C^{1}$ boundary, we prove existence of solution to the following model problem
\begin{equation}
	\label{pbmainreg}
	\begin{cases}
		\displaystyle -\Delta u = f & \text{ in }\Omega,\\
		\displaystyle u\ge 0 & \text{ in }\Omega,\\
		\displaystyle \frac{\partial u}{\partial \nu} + \lambda u = \frac{g}{u^\eta} & \text{ on } \partial\Omega,
	\end{cases}
\end{equation}
where $\eta\ge 0$ and $f\in L^{\frac{2N}{N+2}}(\Omega)$, $\lambda\in L^\infty(\partial\Omega)$ (not identically null)
 and $g\in L^r(\partial\Omega)$ are nonnegative and
\begin{equation}\label{erresecreg}
	r = \max\left(\frac{2(N-1)}{N+\eta (N-2)},1\right).
\end{equation}
The main interesting fact in this section is that, under the above assumptions and through classical tools, the solution is far away from zero on $\partial \Omega$.  Roughly speaking, this means that problem \eqref{pbmainreg} is non-singular.
\\
Let us firstly precise what we mean by a weak solution.
\begin{defin}\label{weakdefreg}
	A function $u\in H^1(\Omega)$ is a weak solution to \eqref{pbmainreg} if $gu^{-\eta} \in L^1(\partial\Omega)$  and if it satisfies
	\begin{equation}\label{weakdefregformulation}
		\int_\Omega \nabla u\cdot \nabla \varphi +  \int_{\partial\Omega} \lambda u\varphi d \mathcal{H}^{N-1}  = \int_\Omega f\varphi + \int_{\partial\Omega} \frac{g\varphi}{u^\eta} d \mathcal{H}^{N-1},
	\end{equation}
	for all $\varphi \in H^1(\Omega)\cap L^\infty(\partial\Omega)$.
\end{defin}

Let us state the existence result for this section.
\begin{theorem}\label{teoexreg}
	Let $0\le f\in L^{\frac{2N}{N+2}}(\Omega)$, let $0 \le \lambda \in L^{\infty}(\partial\Omega)$ be not identically null and let $0\le g\in L^r(\partial\Omega)$ with $r$ satisfying \eqref{erresecreg}. Then there exists a weak solution to \eqref{pbmainreg}.
\end{theorem}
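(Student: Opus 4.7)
The plan is to approximate the singular boundary condition with bounded ones, derive uniform $H^1$-bounds together with a uniform positive lower bound of the approximating solutions on $\partial\Omega$, and pass to the limit. The lower bound is what eliminates the singular character and makes the limit argument routine.

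For each $n\in\mathbb{N}$, I would set $g_n:=T_n(g)$ and consider
\begin{equation*}
-\Delta u_n = f \text{ in } \Omega,\qquad \frac{\partial u_n}{\partial\nu}+\lambda u_n = \frac{g_n}{(u_n^{+}+1/n)^{\eta}} \text{ on } \partial\Omega.
\end{equation*}
For each fixed $n$ the boundary datum is bounded and $f\in L^{2N/(N+2)}(\Omega)\subset (H^1(\Omega))'$, so a weak solution $u_n\in H^1(\Omega)$ exists via Schauder's fixed point or a monotone iteration scheme exploiting the decreasing dependence of the right-hand side on $u$. Testing with $-u_n^{-}$ yields $u_n\ge 0$, and the monotonicity of $n\mapsto g_n/(u+1/n)^{\eta}$ gives $u_n\le u_{n+1}$. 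For the uniform $H^1$-estimate I would take $\varphi=u_n$: the interior contribution is handled by H\"older and the Sobolev embedding, while the boundary contribution is bounded using the pointwise inequality $u_n/(u_n+1/n)^{\eta}\le (u_n+1/n)^{1-\eta}$, then H\"older with exponents $r$ (as in \eqref{erresecreg}) and $r'$; a direct computation shows $(1-\eta)r'=2(N-1)/(N-2)$ is exactly the critical trace exponent in \eqref{trace}, so the boundary integral is controlled by $C\|g\|_{L^r(\partial\Omega)}(1+\|u_n\|_{\lambda,2})^{1-\eta}$ and can be absorbed on the left because $1-\eta<2$. (When $\eta\ge 1$, where $r=1$, I would instead use the uniform lower bound proved below to estimate $u_n^{1-\eta}\le c_0^{1-\eta}$ pointwise on $\partial\Omega$.) This yields $\|u_n\|_{\lambda,2}\le C$ uniformly in $n$.

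The heart of the proof is the uniform positive lower bound on $\partial\Omega$. Assuming $g\not\equiv 0$ (the complementary case being non-singular), I would pick $\varepsilon_0>0$ such that the set $E:=\{x\in\partial\Omega:g(x)\ge\varepsilon_0\}$ has positive $\mathcal{H}^{N-1}$-measure, and let $\underline u$ be the weak solution of
\begin{equation*}
-\Delta \underline u = 0 \text{ in } \Omega,\qquad \frac{\partial \underline u}{\partial\nu}+\lambda \underline u = \tilde\varepsilon \chi_{E} \text{ on } \partial\Omega,
\end{equation*}
with a small parameter $\tilde\varepsilon>0$ to be fixed. Standard elliptic regularity gives $\underline u\in C(\overline\Omega)$ with $\|\underline u\|_{L^\infty(\Omega)}\le M$; since $\lambda\in L^\infty(\partial\Omega)$ is nonnegative and not identically zero, the Hopf lemma forbids $\underline u$ from vanishing on $\partial\Omega$, because at a putative zero $x_0$ one would simultaneously have $\partial_\nu\underline u(x_0)<0$ by Hopf and $\partial_\nu\underline u(x_0)=\tilde\varepsilon\chi_E(x_0)-\lambda(x_0)\underline u(x_0)\ge 0$ from the boundary equation. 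By continuity, $\underline u\ge c_0>0$ on $\overline\Omega$. Next I would choose $\tilde\varepsilon$ with $\tilde\varepsilon(M+1)^{\eta}\le\varepsilon_0$; on $\{\underline u>u_n\}\cap\partial\Omega$ we then have $u_n<\underline u\le M$ and hence $g_n/(u_n+1/n)^{\eta}\ge\tilde\varepsilon\chi_E$ for $n$ large, so that testing both problems with $(\underline u-u_n)^{+}$ and subtracting forces $(\underline u-u_n)^{+}\equiv 0$, i.e.\ $u_n\ge\underline u\ge c_0$ on $\overline\Omega$ uniformly in $n$. Equipped with the two estimates, up to a subsequence $u_n\rightharpoonup u$ in $H^1(\Omega)$ with a.e.\ convergence in $\Omega$ and on $\partial\Omega$; the linear terms pass to the limit in the usual way, while the singular boundary term is handled by dominated convergence with integrable dominant $g|\varphi|/c_0^{\eta}$, yielding \eqref{weakdefregformulation}. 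The main obstacle is precisely this third step: producing a sub-solution that is strictly positive up to $\partial\Omega$, for which $\lambda\in L^\infty(\partial\Omega)$, $\lambda\not\equiv 0$, is indispensable (to invoke Hopf), and calibrating its boundary datum so that the comparison with $u_n$ can be carried out without any a priori $L^\infty$ control on $u_n$.
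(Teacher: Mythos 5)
Your overall architecture coincides with the paper's: the same approximating problems, existence via Schauder, nonnegativity and monotonicity by testing with differences, the uniform $H^1$ bound obtained by testing with $u_n$ and using H\"older, the trace inequality \eqref{trace} and the exponent identity $(1-\eta)\frac{r}{r-1}=\frac{2(N-1)}{N-2}$ (resp.\ the boundary lower bound when $\eta\ge1$), and the passage to the limit by weak $H^1$ compactness plus dominated convergence with dominant $g\,\overline{c}^{-\eta}|\varphi|$. The one step where you diverge, the uniform positive lower bound on $\partial\Omega$, is also where your argument has a genuine gap. Your barrier $\underline u$ solves a Robin problem whose data, the coefficient $\lambda\in L^\infty(\partial\Omega)$ and the source $\tilde\varepsilon\chi_E$, are merely bounded and in general discontinuous; such a solution is at best H\"older continuous up to $\partial\Omega$ and satisfies its boundary condition only in the weak/trace sense, $\mathcal{H}^{N-1}$-a.e. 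Hence at a putative zero $x_0\in\partial\Omega$ you can neither assert that the classical normal derivative $\partial_\nu\underline u(x_0)$ exists (which the Hopf-lemma contradiction requires) nor evaluate the identity $\partial_\nu\underline u(x_0)=\tilde\varepsilon\chi_E(x_0)-\lambda(x_0)\underline u(x_0)$ at that particular point, which may lie in the exceptional null set. The defect is not cosmetic and cannot be removed by mollifying the datum: if $E=\{g\ge\varepsilon_0\}$ has empty interior, the only continuous function dominated by $\tilde\varepsilon\chi_E$ is zero, so you cannot trade $\chi_E$ for a regular source while keeping the pointwise inequality $g_n(u_n+\frac1n)^{-\eta}\ge\tilde\varepsilon\chi_E$ on the contact set that your comparison needs. (There is also a mild circularity in fixing $\tilde\varepsilon$ through $M=\|\underline u\|_{L^\infty}$, which itself scales with $\tilde\varepsilon$; that one is harmless by linearity.)

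The paper avoids exactly this obstruction by a different choice of barrier: $v$ solving \eqref{pbsottosol}, i.e.\ $-\Delta v=T_1(f)$ in $\Omega$ with $\partial_\nu v+\|\lambda\|_{L^\infty(\partial\Omega)}v=0$ on $\partial\Omega$. There the Robin coefficient is constant and the boundary source vanishes, so classical regularity gives $v\in C^1(\overline{\Omega})$, the boundary condition holds pointwise, and the Hopf lemma of \cite{vaz} yields $v>0$ on $\overline{\Omega}$; the comparison $u_n\ge v$ then uses $\lambda\le\|\lambda\|_{L^\infty(\partial\Omega)}$, $T_1(f)\le f_n$ and $g_n\ge0$, in the same spirit as your comparison step. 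If you replace your $\underline u$ by this $v$ (or otherwise supply a boundary Hopf/Harnack-type estimate valid for merely bounded Robin data), the rest of your proof goes through as written. It is worth noting what your variant was aiming at: the paper's barrier is driven by $f$ (and degenerates if $f\equiv0$), whereas yours is driven by $g$; making a $g$-driven barrier rigorous would require more than the classical Hopf lemma.
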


\begin{remark}
Let us stress that the previous existence result concerns nonnegative solutions. Anyway simple basic examples show that, in general, 	changing sign solutions exist as shown in Example \ref{ejem} below. Roughly speaking, here we are formally dealing with existence of solutions to
	\begin{equation*}
		\begin{cases}
			\displaystyle -\Delta u = f & \text{ in }\Omega,\\
			\displaystyle \frac{\partial u}{\partial \nu} + \lambda u = \frac{g}{|u|^\eta} & \text{ on } \partial\Omega,
		\end{cases}
	\end{equation*}
which are nonnegative. Let us also underline that the study of problems as in \eqref{pbmainreg} where $f, g$ are not necessarily positive is the object of a forthcoming paper. Obviously, in this case, nonnegative solutions are not always expected to exist.
\end{remark}
\begin{example}\label{ejem}
	Let $B_1(0)$ be the unit ball in $\mathbb R^2$ and let us consider the following problem
	\begin{equation}\label{ejemeq}
		\left\{\begin{array}{ll}
			-\Delta u=0&\hbox{in }B_1(0)\\[3mm]
			\displaystyle \frac{\partial u}{\partial \nu}+\lambda u=\frac{g}{u}&\hbox{on }\partial B_1(0)
		\end{array}\right.
	\end{equation}
	
	In what follows, we use polar coordinates $0\le r\le 1$ and $-\pi<\theta\le \pi$. If we fix the nonnegative functions
 	\[\lambda(\theta)=\frac1{|\theta|^\alpha}\,,\]
 	with $0\le \alpha<1$ and
	 \[g(\theta)=\sin^2\theta\left(1+\frac1{|\theta|^\alpha}\right),\]
	which give $\lambda\in L^1(\partial B_1(0))$ while $g\in L^\infty(\partial B_1(0))$
	Then it is simple to convince that $u(r,\theta)=r\sin \theta$ is a solution to \eqref{ejemeq}.
	
	Let observe that $u$ vanishes on the boundary at $\theta=0$ and  $\theta=\pi$. At $\theta=0$, function $\lambda$ exhibits a singularity. However, at  $\theta=\pi$, the weight $\lambda$ is bounded. Moreover, when $\alpha=0$, $\lambda$ is bounded but both zeros remain.
	
\end{example}

\subsection{Approximation scheme and proof of the existence result}

In order to prove the above theorem, we work by approximation through the following problems
\begin{equation}
	\label{pbapprox}
	\begin{cases}
		\displaystyle -\Delta u_n = f_n & \text{ in }\Omega,\\
		\displaystyle u_n \ge 0 & \text{ in }\Omega,\\
		\displaystyle \frac{\partial u_n}{\partial \nu} + \lambda u_n = \frac{g_n}{\left(|u_n|+\frac{1}{n}\right)^\eta} & \text{ on } \partial\Omega,
	\end{cases}
\end{equation}
where $f_n:= T_n(f)$ and $g_n:=T_n(g)$. We first show the existence of a weak solution to \eqref{pbapprox}, namely a function $u_n\in H^1(\Omega)$ satisfying
\begin{equation}\label{weakfor}
	\int_\Omega \nabla u_n\cdot \nabla \varphi +  \int_{\partial\Omega} \lambda u_n\varphi d \mathcal{H}^{N-1}  = \int_\Omega f_n\varphi + \int_{\partial\Omega} \frac{g_n\varphi}{\left(|u_n|+\frac{1}{n}\right)^\eta} d \mathcal{H}^{N-1},
\end{equation}
for all $\varphi \in H^1(\Omega)$.
\begin{lemma}\label{lem_ex}
	Let $0\le f \in L^1(\Omega)$, let $0\le \lambda\in L^\infty(\partial\Omega)$ not identically null and let $0\le g\in L^1(\partial\Omega)$. Then there exists a nonnegative weak solution $u_n$ to \eqref{pbapprox}.
\end{lemma}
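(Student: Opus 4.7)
The plan is to realize $u_n$ as a fixed point of a compact map associated with a linear Robin problem, via Schauder's theorem, and then to recover nonnegativity by testing with the negative part of the solution.

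First, for each $v \in L^2(\partial\Omega)$, I would freeze the nonlinearity and consider the linear problem
\begin{equation*}
  -\Delta w = f_n \text{ in } \Omega, \qquad \frac{\partial w}{\partial\nu} + \lambda w = \frac{g_n}{(|v|+\tfrac1n)^\eta} \text{ on } \partial\Omega.
\end{equation*}
The associated bilinear form on $H^1(\Omega)$ is continuous and, since $\lambda \not\equiv 0$, coercive thanks to the equivalence between $\|\cdot\|_{\lambda,2}$ and the standard norm recalled in the preliminaries. The right-hand side defines a bounded linear functional on $H^1(\Omega)$ because $f_n \le n$ and the boundary datum is crudely controlled by $\frac{g_n}{(|v|+1/n)^\eta} \le n^{1+\eta}$. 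Lax-Milgram then produces a unique $w \in H^1(\Omega)$; set $T(v) := w|_{\partial\Omega}$.

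Next, I would test this linear problem with $w$ itself and use the trace inequality \eqref{trace} to obtain a bound $\|w\|_{H^1(\Omega)} \le C(n)$ that is independent of $v$. Hence, for $R = R(n)$ large enough, the closed ball $B_R \subset L^2(\partial\Omega)$ is invariant under $T$, and by compactness of the trace embedding $H^1(\Omega) \hookrightarrow L^2(\partial\Omega)$ the image $T(B_R)$ is relatively compact. For continuity, if $v_k \to v$ in $L^2(\partial\Omega)$, pass to an a.e.-convergent subsequence; the uniform bound $\frac{g_n}{(|v_k|+1/n)^\eta} \le n^{1+\eta}$ lets dominated convergence act on the boundary datum, and the standard passage to the limit in the weak formulation of the linear problem, together with Lax-Milgram uniqueness of the limit, yields $T(v_k) \to T(v)$ along the full sequence. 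Schauder's theorem then produces a fixed point $u_n$, which satisfies \eqref{weakfor}.

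Finally, to obtain $u_n \ge 0$, I would take $\varphi = -u_n^- \in H^1(\Omega)$ in \eqref{weakfor}: the left-hand side collapses to $\|u_n^-\|_{\lambda,2}^2$ (using $u_n u_n^- = -(u_n^-)^2$ and $\nabla u_n \cdot \nabla u_n^- = -|\nabla u_n^-|^2$), while the right-hand side is nonpositive because $f_n,\, g_n,\, u_n^- \ge 0$, and coercivity forces $u_n^- \equiv 0$. The only delicate point, and what I regard as the main obstacle, is arranging the functional-analytic framework so that $T$ is simultaneously well-defined, continuous, and compact on the same space: the $1/n$ regularization is crucial here, as it makes the boundary datum uniformly bounded in $v$ and renders the nonlinearity continuous along any a.e.-convergent sequence, preventing any singular behavior from surfacing at this stage.
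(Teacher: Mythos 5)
Your proposal is correct and follows essentially the same route as the paper: freeze the nonlinearity, solve the resulting linear Robin problem, and apply Schauder's fixed point theorem to the trace map $T(v)=w|_{\partial\Omega}$, using the crude bounds $f_n\le n$ and $g_n(|v|+\tfrac1n)^{-\eta}\le n^{1+\eta}$ for the $v$-independent energy estimate, the compact trace embedding for compactness, and a limit argument for continuity, with nonnegativity recovered by testing with the negative part. The only cosmetic differences are that the paper invokes the classical existence results of Leray--Lions for the frozen problem (where you use Lax--Milgram, equally valid since the frozen problem is linear) and that your continuity step, via a.e.\ subsequences, dominated convergence and uniqueness of the limit, is if anything slightly more detailed than the paper's.
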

\begin{proof}
	In order to show the existence of a solution to \eqref{pbapprox} let us consider
\begin{equation}
	\label{pbapproxschauder}
	\begin{cases}
		\displaystyle -\Delta w = f_n & \text{ in }\Omega,\\
		\displaystyle \frac{\partial w}{\partial \nu} + \lambda w = \frac{g_n}{\left(|v|+\frac{1}{n}\right)^\eta} & \text{ on } \partial\Omega,
	\end{cases}
\end{equation}	
	where $v\in L^2(\partial \Omega)$. The existence of a solution $w\in H^1(\Omega)$ to \eqref{pbapproxschauder} follows, for example, from the classical results contained in \cite{ll}; moreover it is simply to deduce that $w$ is actually nonnegative and, from a classical argument by Stampacchia, it is also bounded. In order to deduce the existence of a solution $u_n$ to \eqref{pbapprox} we aim to show that the application $T: L^2(\partial\Omega)\mapsto L^2(\partial\Omega)$ such that $T(v)=w\big|_{\partial\Omega}$ admits a fixed point. Hence it will be sufficient to show that $T$ is invariant, compact and continuous to  apply the Schauder fixed point Theorem in order to conclude the proof.
	
	\medskip
	
	We start by proving that $T$ is invariant; to this end let us take $w$ as a test function in the weak formulation in \eqref{pbapproxschauder} deducing that (recall $f_n,g_n\le n$)
	\begin{equation*}\label{exapprox1}
		\int_\Omega |\nabla w|^2 + \int_{\partial\Omega} \lambda w^2 d \mathcal{H}^{N-1} \le n\int_\Omega  w + n^{\eta+1}\int_{\partial\Omega} w d \mathcal{H}^{N-1}.
	\end{equation*}
	Now observe that on the left hand side our norm appears, while on the right hand side we may apply Young's inequality with weights $(\varepsilon_1, C_{\varepsilon_1})$, $(\varepsilon_2, C_{\varepsilon_2})$ (where $\varepsilon_1,\varepsilon_2>0$ to be chosen) which leads to
	\begin{equation*}\label{exapprox2}
		\|w\|^2_{\lambda,2} \le n\varepsilon_1\int_\Omega  w^2 + n^{\eta+1}\varepsilon_2 \int_{\partial\Omega} w^2 d \mathcal{H}^{N-1} + 	C_{\varepsilon_1} n|\Omega|+ C_{\varepsilon_2} n^{\eta+1}\mathcal{H}^{N-1}(\partial\Omega).
	\end{equation*}	
	Then applying \eqref{trace} one simply gets
		\begin{equation*}
		\|w\|^2_{\lambda,2} \le n\varepsilon_1C_1 \|w\|^2_{\lambda,2} + n^{\eta+1}\varepsilon_2 C_2\|w\|^2_{\lambda,2} + 	C_{\varepsilon_1} n|\Omega|+ C_{\varepsilon_2} n^{\eta+1}\mathcal{H}^{N-1}(\partial\Omega),
			\end{equation*}	
		where we also used that $\|\cdot\|_{\lambda,2}$ and $\|\cdot\|_{H^1(\Omega)}$ are equivalent norms.
	Then fixing $\varepsilon_2$ satisfying $n^{\eta+1}\varepsilon_2 C_2<\frac{1}{2}$ and $\varepsilon_1$ such that $n\varepsilon_1 C_1<\frac{1}{4}$, one deduces that

	$$\|w\|^2_{\lambda,2}\le C_{n}$$

	where $C_n$ is a positive constant which depends on $n$ but it is independent on $w$. Applying again \eqref{trace}, we obtain that a ball in $L^2(\partial \Omega)$ (let's say of radius $R_n$) is invariant for $T$.

	\medskip

	Moreover, since $C_n$ does not depend on $w$, the compactness of the trace embedding and the above argument show that $\overline{T(A)}$ is compact for any $A$ subset of the ball of radius $R_n$ contained in $L^2(\partial\Omega)$.

	\medskip

	For the continuity we let $v_k\in L^2(\partial\Omega)$ which converges to $v$ in $L^2(\partial\Omega)$ as $k\to\infty$ and we consider $T(v_k)=w_k\big|_{\partial\Omega}$ that is $w_k$ satisfies	
	\begin{equation}
		\label{pbapproxschauder1}
		\begin{cases}
			\displaystyle -\Delta w_k = f_n & \text{ in }\Omega,\\
			\displaystyle \frac{\partial w_k}{\partial \nu} + \lambda w_k = \frac{g_n}{\left(|v_k|+\frac{1}{n}\right)^\eta} & \text{ on } \partial\Omega.
		\end{cases}
	\end{equation}
	Reasoning as for the proof of the invariance, one deduces that $w_k$ is bounded in $H^1(\Omega)$ with respect to $k$. This is sufficient to pass to the limit any term in the weak formulation of \eqref{pbapproxschauder1} using weak convergence in $H^1(\Omega)$ and strong convergence in $L^2(\partial\Omega)$ of $w_k$ to a function $w$
	as $k\to\infty$.
	
	\medskip
	
	As already mentioned above, we are now able to deduce from the Schauder fixed point Theorem that there exists a solution $u_n$ to \eqref{pbapprox}. It follows by taking $u_n^-$ that $u_n \ge 0$ almost everywhere in $\Omega$.  This concludes the proof.
\end{proof}

\begin{remark}
	Let us just underline that, instead of finding the fixed point, one could have proven Lemma \ref{lem_ex} by minimizing the following functional
	\[
	I(u)=\frac12\int_\Omega|\nabla u|^2+\frac12\int_{\partial\Omega}\lambda u^2 d \mathcal{H}^{N-1}-\int_\Omega f_nu-\int_{\partial\Omega}g_n\log\left(\frac1n+u^+\right)d \mathcal{H}^{N-1}, \ u\in H^1(\Omega).
	\]
\end{remark}

Let us now show that the sequence $u_n$ is nondecreasing in $n$.

\begin{lemma}\label{lem_monotonia}
	Under the assumptions of Lemma \ref{lem_ex} let $u_n$ be a solution to \eqref{pbapprox}. Then the sequence $u_n$ is nondecreasing with respect to $n$. Moreover there exists $\overline{c}>0$ such that
	\begin{equation}\label{boundfrombelow}
		u_n(x) \ge \overline{c}>0 \ \text{ for $\mathcal{H}^{N-1}$ almost every } x\in \partial\Omega \text{ and for any $n\in \mathbb{N}$}.
	\end{equation}
\end{lemma}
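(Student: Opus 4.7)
For the monotonicity statement, my plan is to subtract the weak formulations for $u_n$ and $u_{n+1}$ and test with $\varphi = (u_n - u_{n+1})^+ \in H^1(\Omega)$. The left-hand side then equals $\|(u_n-u_{n+1})^+\|^2_{\lambda,2}\ge 0$, so it suffices to show the right-hand side is nonpositive. Since $f_n = T_n(f)\le T_{n+1}(f)=f_{n+1}$ pointwise, the volume term is $\le 0$. On the set $\{u_n>u_{n+1}\}\cap\partial\Omega$ we have both $g_n\le g_{n+1}$ and $u_n+\tfrac1n > u_{n+1}+\tfrac1{n+1}$, so
\[
\frac{g_n}{(u_n+\tfrac1n)^\eta}\le \frac{g_{n+1}}{(u_{n+1}+\tfrac1{n+1})^\eta},
\]
which makes the boundary term $\le 0$ as well. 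Hence $(u_n-u_{n+1})^+\equiv 0$, i.e. $u_n\le u_{n+1}$.

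Because of this monotonicity, proving \eqref{boundfrombelow} is reduced to showing that $u_1\ge \overline{c}>0$ on $\partial\Omega$; the natural strategy is a comparison with a sub-solution $\underline{u}$ of the limit (non-singular) Robin problem, which is what the statement \eqref{pbsottosol} referred to in the introduction provides. A convenient choice is
\[
-\Delta \underline{u}=f_1\quad\text{in }\Omega,\qquad \frac{\partial \underline{u}}{\partial\nu}+\lambda\,\underline{u}=0\quad\text{on }\partial\Omega,
\]
which, since $\lambda\in L^\infty(\partial\Omega)$ is not identically null, admits a unique nonnegative bounded weak solution by classical Robin theory (as in \cite{ll}), and whose boundedness and Hölder regularity of $\lambda$, $f_1$ allow the application of the strong maximum principle and the Hopf lemma. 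Integrating the equation gives $\int_{\partial\Omega}\lambda\,\underline{u}\,d\mathcal{H}^{N-1}=\int_\Omega f_1$, so provided $f\not\equiv 0$ (with an analogous sub-solution driven by $g_1$ handling the complementary case, so that the case $f\equiv g\equiv 0$ is excluded as trivial) we have $\underline{u}\not\equiv 0$, hence $\underline{u}>0$ in $\Omega$ by the strong minimum principle.

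I would then apply Hopf at any hypothetical zero $x_0\in\partial\Omega$: on the one hand Hopf yields $\tfrac{\partial \underline{u}}{\partial\nu}(x_0)<0$, while on the other hand the Robin condition forces $\tfrac{\partial \underline{u}}{\partial\nu}(x_0)=-\lambda(x_0)\underline{u}(x_0)=0$, a contradiction. Continuity on the compact set $\partial\Omega$ then gives a uniform $\overline{c}>0$ with $\underline{u}\ge \overline{c}$ on $\partial\Omega$. Comparison with $u_n$ is obtained by testing the difference of the two weak formulations with $(\underline{u}-u_n)^+$: on the right, $f_1-f_n\le 0$ because $n\ge 1$, and $0-\tfrac{g_n}{(u_n+1/n)^\eta}\le 0$, so again both sides vanish and $u_n\ge \underline{u}\ge\overline{c}$ on $\partial\Omega$.

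The main subtlety will be the use of the Hopf lemma: it requires enough regularity (at least $C^{1,\alpha}$ up to the boundary) for the sub-solution $\underline{u}$, which needs to be justified from the $L^\infty$ data $f_1$ and $\lambda$ together with the $C^1$ regularity of $\partial\Omega$; this is precisely where $\lambda$ being bounded (and not identically zero) is essential, since otherwise neither the coercivity granting a bounded $\underline{u}$ nor the Hopf-type contradiction would be available. A secondary technical point is ensuring the comparison step with $u_n$ is licit, i.e.\ that $(\underline{u}-u_n)^+$ is an admissible test function in $H^1(\Omega)$, which follows from $\underline{u},u_n\in H^1(\Omega)\cap L^\infty(\Omega)$.
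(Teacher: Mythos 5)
Your first half (monotonicity) is exactly the paper's argument: test the difference of the weak formulations of \eqref{pbapprox} with $(u_n-u_{n+1})^+$, use $f_n\le f_{n+1}$, $g_n\le g_{n+1}$ and $u_n+\tfrac1n>u_{n+1}+\tfrac1{n+1}$ on $\{u_n>u_{n+1}\}$, and conclude $\|(u_n-u_{n+1})^+\|_{\lambda,2}=0$. That part is correct.

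The lower bound is where you deviate from the paper in one crucial detail, and the deviation creates a genuine gap. You take as sub-solution the solution $\underline u$ of $-\Delta \underline u=f_1$ with the \emph{variable}-coefficient condition $\partial_\nu \underline u+\lambda(x)\underline u=0$, and then argue pointwise with Hopf at a hypothetical boundary zero, writing $\partial_\nu\underline u(x_0)=-\lambda(x_0)\underline u(x_0)=0$. But $\lambda$ is only assumed to be in $L^\infty(\partial\Omega)$: it has no continuity or H\"older regularity (you even invoke ``H\"older regularity of $\lambda$'', which is not among the hypotheses), so one cannot obtain $\underline u\in C^{1}(\overline\Omega)$ (or $C^{1,\alpha}$) up to the boundary in general; the normal derivative need not exist at individual boundary points, and the Robin condition holds only in the weak/trace sense, $\mathcal H^{N-1}$-a.e. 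Hence the pointwise Hopf contradiction at a single point $x_0$ is not licit, and this is precisely the step you flag as ``to be justified'' without justifying it — it cannot be justified at this level of generality. The paper's fix is the point of \eqref{pbsottosol}: the auxiliary problem is taken with the \emph{constant} coefficient $\|\lambda\|_{L^\infty(\partial\Omega)}$, for which classical regularity (constant coefficient, $f_1\in L^\infty(\Omega)$, smooth boundary) gives a solution $v\in C^1(\overline\Omega)$, so the Hopf lemma of \cite{vaz} applies and yields $v>0$ on $\overline\Omega$; and since $0\le\lambda\le\|\lambda\|_{L^\infty(\partial\Omega)}$ and $v\ge0$, this $v$ is still a sub-solution of every problem \eqref{pbapprox} (on the boundary term one uses $\|\lambda\|_{L^\infty(\partial\Omega)}v-\lambda u_n\ge \lambda(v-u_n)$), so your comparison step with $(v-u_n)^+$ goes through verbatim and gives $u_n\ge v\ge\min_{\partial\Omega}v=\overline c>0$. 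A final remark: your worry about $f\equiv0$ is legitimate but orthogonal to the main issue — the paper's own proof tacitly takes $f_1\not\equiv0$ — and your proposed ``$g_1$-driven sub-solution'' for that case is unspecified and would run into the same regularity obstruction, since $g$ is only in $L^r(\partial\Omega)$.
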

\begin{proof}
	Let us take $(u_n- u_{n+1})^+$ as a test function in the difference of the weak formulations solved, respectively, by $u_n$ and by $u_{n+1}$. Then one yields to
	\begin{equation*}
	\begin{aligned}
		&\int_{\Omega} |\nabla (u_n- u_{n+1})^+|^2 + \int_{\partial\Omega} \lambda \left((u_n- u_{n+1})^+\right)^2d \mathcal{H}^{N-1}
		\le \int_\Omega \left(f_n - f_{n+1}\right)(u_n- u_{n+1})^+
		\\
		&+ \int_{\partial\Omega} \left(\frac{g_n}{(u_n+\frac{1}{n})^\eta} - \frac{g_{n+1}}{(u_{n+1}+\frac{1}{n+1})^\eta}\right)(u_n- u_{n+1})^+d \mathcal{H}^{N-1},
	\end{aligned}	
\end{equation*}
which implies that
	\begin{equation}\label{mono1}
	\begin{aligned}
		&\int_{\Omega} |\nabla (u_n- u_{n+1})^+|^2 + \int_{\partial\Omega} \lambda \left((u_n- u_{n+1})^+\right)^2d \mathcal{H}^{N-1}
		\le
		\\
		&\int_{\partial\Omega} g_{n+1} \left(\frac{1}{(u_n+\frac{1}{n+1})^\eta} - \frac{1}{(u_{n+1}+\frac{1}{n+1})^\eta}\right)(u_n- u_{n+1})^+d \mathcal{H}^{N-1} \le 0.
	\end{aligned}	
\end{equation}
Equation \eqref{mono1} gives that $\|(u_n-u_{n+1})^+\|_{\lambda, 2}=0$ which means that $u_{n+1} \ge u_n$  $\mathcal{H}^{N-1}$ almost everywhere on $\partial\Omega$ and almost everywhere in $\Omega$.

\medskip

To prove \eqref{boundfrombelow} let us observe that it follows from classical results that there exists $v\in C^1(\overline{\Omega})$ nonnegative solution to
\begin{equation}
	\label{pbsottosol}
	\begin{cases}
		\displaystyle -\Delta v = f_1 & \text{ in }\Omega,\\
		\displaystyle \frac{\partial v}{\partial \nu} + ||\lambda||_{L^\infty(\partial\Omega)} v = 0 & \text{ on } \partial\Omega.
	\end{cases}
\end{equation}
A contradiction argument, using the Hopf Lemma \cite[Theorem $2$]{vaz}, shows that $v>0$ in $\bar\Omega$.
Moreover, analogously to the monotonicity's proof of $u_n$ in $n$, one can show that
\begin{equation*}
	u_n\ge v \ \text{ for $\mathcal{H}^{N-1}$ almost every } x\in \partial\Omega \text{ and for any $n\in \mathbb{N}$}.
\end{equation*}
Since $v$ is continuous and strictly positive on $\partial\Omega$ this shows that
 \begin{equation*}
 	u_n\ge v> \min_{\partial\Omega}v = \overline{c} \text{ for $\mathcal{H}^{N-1}$ almost every } x\in \partial\Omega \text{ and for any $n\in \mathbb{N}$}.
 \end{equation*}
\end{proof}

Let us explicitly underline that in the previous proof the fact that $\lambda$ is bounded and not identically null plays an essential role.

\medskip

Let us show some a priori estimates on $u_n$ with respect to $n$.
\begin{lemma}\label{lem_priorireg}
	Let $0\le f\in L^{\frac{2N}{N+2}}(\Omega)$, $0 \le \lambda \in L^{\infty}(\partial\Omega)$ not identically null and $0\le g\in L^r(\partial\Omega)$ with $r$ satisfying \eqref{erresecreg}. Let $u_n$ be a solution to \eqref{pbapprox} then $u_n$ is bounded in $H^1(\Omega)$ with respect to $n$.
\end{lemma}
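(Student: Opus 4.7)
The plan is to take $u_n\in H^1(\Omega)$ itself as a test function in the weak formulation \eqref{weakfor} to obtain
\begin{equation*}
\|u_n\|_{\lambda,2}^2 \;=\; \int_\Omega |\nabla u_n|^2 + \int_{\partial\Omega}\lambda u_n^2 \, d\mathcal{H}^{N-1} \;=\; \int_\Omega f_n u_n + \int_{\partial\Omega}\frac{g_n u_n}{\bigl(u_n+\tfrac1n\bigr)^\eta}\, d\mathcal{H}^{N-1},
\end{equation*}
and then to show that both integrals on the right-hand side can be absorbed into $\|u_n\|_{H^1(\Omega)}^2$ (which is equivalent to $\|u_n\|_{\lambda,2}^2$). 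For the interior term, Hölder's inequality with exponent $\frac{2N}{N+2}$ combined with the Sobolev embedding $H^1(\Omega)\hookrightarrow L^{2^*}(\Omega)$ gives
\begin{equation*}
\int_\Omega f_n u_n \;\le\; \|f\|_{L^{\frac{2N}{N+2}}(\Omega)}\,\|u_n\|_{L^{2^*}(\Omega)} \;\le\; C\,\|f\|_{L^{\frac{2N}{N+2}}(\Omega)}\,\|u_n\|_{H^1(\Omega)}.
\end{equation*}

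The core of the argument is controlling the boundary integral, and here I would split into two cases according to $\eta$. For $\eta\ge 1$ (so that $r=1$), I use the uniform lower bound $u_n\ge \overline{c}$ on $\partial\Omega$ from Lemma \ref{lem_monotonia}: since $\frac{u_n}{(u_n+1/n)^\eta}\le u_n^{1-\eta}\le \overline{c}^{\,1-\eta}$, the boundary integral is bounded by $\overline{c}^{\,1-\eta}\|g\|_{L^1(\partial\Omega)}$. For $0\le \eta<1$, I use instead the pointwise estimate $\frac{u_n}{(u_n+1/n)^\eta}\le (u_n+1/n)^{1-\eta}\le u_n^{1-\eta}+1$ (by subadditivity of $s\mapsto s^{1-\eta}$), and then apply Hölder on $\partial\Omega$ with the conjugate pair $(r,r')$ where $r=\frac{2(N-1)}{N+\eta(N-2)}$, whose conjugate satisfies $r'(1-\eta)=\frac{2(N-1)}{N-2}$. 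The trace inequality \eqref{trace} then yields
\begin{equation*}
\int_{\partial\Omega} g_n u_n^{1-\eta}\, d\mathcal{H}^{N-1} \;\le\; \|g\|_{L^r(\partial\Omega)}\,\|u_n\|_{L^{\frac{2(N-1)}{N-2}}(\partial\Omega)}^{1-\eta} \;\le\; C\,\|g\|_{L^r(\partial\Omega)}\,\|u_n\|_{H^1(\Omega)}^{1-\eta}.
\end{equation*}

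Collecting these estimates, one gets an inequality of the form
\begin{equation*}
\|u_n\|_{\lambda,2}^2 \;\le\; C_1\|u_n\|_{H^1(\Omega)} + C_2\|u_n\|_{H^1(\Omega)}^{1-\eta} + C_3,
\end{equation*}
where the exponents on the right are strictly less than $2$. Young's inequality then absorbs the $\|u_n\|_{H^1(\Omega)}$ terms into the left-hand side via the equivalence of norms, giving the desired uniform $H^1(\Omega)$ bound.

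The main obstacle I expect is the careful choice of exponents in the case $0\le \eta<1$: one has to verify that the definition of $r$ in \eqref{erresecreg} is precisely the one that makes the Hölder conjugate match the trace exponent $\frac{2(N-1)}{N-2}$, so that the resulting power of $\|u_n\|_{H^1(\Omega)}$ is $1-\eta<2$ and hence absorbable. The case $\eta\ge 1$ is genuinely different and depends crucially on the positive lower bound \eqref{boundfrombelow}, which is where the assumption $\lambda\in L^\infty(\partial\Omega)$, not identically null, enters the estimate.
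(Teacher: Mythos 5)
Your proposal is correct and follows essentially the same route as the paper: test with $u_n$, estimate the interior term by H\"older and Sobolev, split the boundary term according to $\eta\ge 1$ (using the lower bound $\overline{c}$ from Lemma \ref{lem_monotonia}) or $\eta<1$ (H\"older with the exponent $r$ of \eqref{erresecreg} plus the trace inequality \eqref{trace}), and conclude by norm equivalence and Young's inequality. The only cosmetic difference is your pointwise bound $u_n(u_n+\tfrac1n)^{-\eta}\le u_n^{1-\eta}+1$ in place of the paper's direct $u_n(u_n+\tfrac1n)^{-\eta}\le u_n^{1-\eta}$, which merely adds a harmless constant.
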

\begin{proof}
	Let us take $u_n$ as a test function in \eqref{weakfor}, obtaining
\begin{equation}\label{stimapriori1}
	\int_\Omega |\nabla u_n|^2 + \int_{\partial\Omega} \lambda u_n^2 d \mathcal{H}^{N-1} = \int_{\Omega} f_n u_n  + \int_{\partial\Omega} \frac{g_n u_n}{(u_n+\frac{1}{n})^\eta}d \mathcal{H}^{N-1}.
\end{equation}
	For the first term on the right-hand of \eqref{stimapriori1}, it follows from the H\"older and Sobolev inequalities that
	\begin{equation}\label{stimapriori2}
		\int_{\Omega} f_n u_n \le \|f\|_{L^{\frac{2N}{N+2}}(\Omega)} \|u_n\|_{L^{\frac{2N}{N-2}}(\Omega)} \le \mathcal{S}_2  \|f\|_{L^{\frac{2N}{N+2}}(\Omega)} \| u_n\|_{H^1(\Omega)},
	\end{equation}
	where $\mathcal{S}_2$ is the best constant in the Sobolev inequality for functions in $H^1(\Omega)$.
	For the second term in the right-hand of \eqref{stimapriori1} we observe that, if $\eta \ge 1$, one can simply estimate as
	\begin{equation}\label{stimapriori3}
		\int_{\partial\Omega} \frac{g_n u_n}{(u_n+\frac{1}{n})^\eta} d \mathcal{H}^{N-1}\le \frac{\|g\|_{L^1(\partial\Omega)}}{\overline{c}^{\eta-1}} .
	\end{equation}
	Otherwise if $\eta<1$ it follows from the H\"older inequality and from the choice of $r$ that
	\begin{equation*}
		\int_{\partial\Omega} \frac{g_n u_n}{(u_n+\frac{1}{n})^\eta}  d \mathcal{H}^{N-1}\le \|g\|_{L^r(\partial\Omega)} \|u_n\|^{1-\eta}_{L^{(1-\eta)\frac{r}{r-1}}(\partial\Omega)} =  \|g\|_{L^r(\partial\Omega)} \|u_n\|^{1-\eta}_{L^{\frac{2(N-1)}{N-2}}(\partial\Omega)},
	\end{equation*}
	which, applying \eqref{trace}, gives
	\begin{equation}\label{stimapriori4}
	\int_{\partial\Omega} \frac{g_n u_n}{(u_n+\frac{1}{n})^\eta} d \mathcal{H}^{N-1} \le c\|g\|_{L^r(\partial\Omega)}\| u_n\|_{H^1(\Omega)}^{1-\eta},
	\end{equation}	
	where $c$ does not depend on $n$. Therefore, gathering \eqref{stimapriori2} and \eqref{stimapriori3} in \eqref{stimapriori1}, one gets that for $\eta\ge 1$ it holds
	\begin{equation*}\label{stimafinale1}
	 	\| u_n\|^2_{\lambda,2} \le c\mathcal{S}_2\|f\|_{L^{\frac{2N}{N+2}}(\Omega)} \| u_n\|_{H^1(\Omega)} + \frac{\|g\|_{L^1(\partial\Omega)}}{\overline{c}^{\eta-1}}.
	\end{equation*}
Otherwise, if $\eta<1$, one uses \eqref{stimapriori4} in place of \eqref{stimapriori3} in order to deduce that

	\begin{equation*}\label{stimafinale2}	
		\begin{aligned}
\|u_n\|^2_{\lambda,2} &\le c\mathcal{S}_2\|f\|_{L^{\frac{2N}{N+2}}(\Omega)} \|u_n\|_{H^1(\Omega)}
	\\
	&+ c\|g\|_{L^r(\partial\Omega)} \|u_n\|_{H^1(\Omega)}^{1-\eta}.
		\end{aligned}
	\end{equation*}
Recalling that $\|\cdot\|_{\lambda,2}$ and $\|\cdot\|_{H^1(\Omega)}$ are equivalent norms
and applying the Young inequality one simply deduces that $u_n$ is bounded in $H^1(\Omega)$ with respect to $n$.
\end{proof}

We are ready to prove Theorem \ref{teoexreg}.

\begin{proof}[Proof of Theorem \ref{teoexreg}]
	
Let $u_n$ be a solution to \eqref{pbapprox} whose existence is guaranteed from Lemma \ref{lem_ex}. Then it follows from Lemma \ref{lem_priorireg} that $u_n$ is bounded in $H^1(\Omega)$ with respect to $n$. Moreover, classical embedding results give that $u_n$ (up to not relabeled subsequences) converges to a function $u$ in $L^q(\Omega)$ for any $q<\frac{2N}{N-2}$ and in $L^t(\partial\Omega)$ for any $t<\frac{2(N-1)}{N-2}$ as $n\to\infty$. This is sufficient to pass to the limit the first and the second term of \eqref{weakfor}. The third term simply passes to the limit in $n$. For the fourth term we apply the Lebesgue Theorem since

\begin{equation*}
	\frac{g_n}{(u_n+\frac{1}{n})^\eta} \le  \frac{g}{\bar c^\eta},
\end{equation*}
$\mathcal{H}^{N-1}$ almost everywhere on $\partial\Omega$. This concludes the proof.

\end{proof}

\begin{remark}
	Let us stress once again that in the current section we heavily used that $\lambda \in L^\infty(\partial\Omega)$ and that is not identically null. Indeed, these facts allowed to exploit the maximum principle deducing that the approximating solutions are bounded away from zero on the boundary of $\Omega$. In some sense, if $\lambda$ is bounded, problem \eqref{pbmainreg} seems to be non-singular. On the other hand, in the next section, under more general assumptions this procedure can not be carried over and we need to control the singularity through the use of suitable test functions.
\end{remark}

\section{$L^1$-data and entropy solutions}
\label{sec_entropy}

In this section let $\Omega$ be an open bounded set of $\mathbb{R}^N$ ($N\ge 2$) with Lipschitz boundary.
Here we generalize the results obtained for \eqref{pbmainreg} in the previous section to the following more general problem:
\begin{equation}
	\label{pbmaingeneral}
	\begin{cases}
		\displaystyle -\operatorname{div}(a(x,\nabla u)) = f & \text{ in }\Omega,\\
		u\ge 0 & \text{ in }\Omega,\\
		\displaystyle a(x,\nabla u)\cdot \nu + \lambda \sigma(u) = h(u)g & \text{ on } \partial\Omega,
	\end{cases}
\end{equation}
where $\displaystyle{a(x,\xi):\Omega\times\mathbb{R}^{N} \to \mathbb{R}^{N}}$ is a Carath\'eodory function such that:
\begin{align}
	&a(x,\xi)\cdot\xi\ge \alpha |\xi|^{p} \quad \text{for some } \alpha>0,
	\label{cara1}\\
	&|a(x,\xi)|\le \beta(z(x) + |\xi|^{p-1}) \quad \text{for some } \beta>0 \text{ and } 0\le z\in L^{\frac{p}{p-1}}(\Omega),
	\label{cara2}\\
	&(a(x,\xi) - a(x,\xi^{'} )) \cdot (\xi -\xi^{'}) > 0,
	\label{cara3}	
\end{align}
for $1<p<N$, for almost every $x$ in $\Omega$ and for every $\xi\neq\xi^{'}$ in $\mathbb{R}^N$.
Here $0\le f\in L^{1}(\Omega)$ and $0\le \lambda\in L^1(\partial\Omega)$ is not identically null in $\Omega$. Finally $0\le g\in L^1(\partial\Omega)$.  Here the function $h$ is continuous on $(0,\infty)$ which is finite outside the origin and it can blow up at zero satisfying the following growth condition:
\begin{equation}\label{k}
	\exists\;\eta \ge 0, c_1,s_1>0: \  h(s) \le \frac{c_1}{s^{\eta}} \quad \text{if }s\leq s_1.
\end{equation}
In what follows we denote as $\displaystyle h(0):=\lim_{s\to 0} h(s)$ which exists.
Moreover we require that
\begin{equation}\label{klim}
	\limsup_{s\to\infty}h(s)<\infty.
\end{equation}
Finally the function $\sigma$ is continuous and such that:
\begin{equation}\label{sigma}
	\sigma(s) \ge s^{p-1} \text{ if } s\ge 0 \text{ and  } \sigma(0)=0.
\end{equation}
\begin{remark}\label{rem_dati}
	Under the above assumptions we are not in position to reason as in Section \ref{sec_reg}; in particular we can not deduce the existence of subsolution as \eqref{pbsottosol} for the approximating sequence which is bounded from below by a positive constant at the boundary of $\Omega$.

	Moreover, besides the unboundedness of $\lambda$, we also require $\sigma$ and $h$ to be functions where no monotonicity is assumed. Finally $f$ and $g$ are merely integrable functions. All the arguments above force us to employ a different technique to pass to the limit in the approximation sequence.
\end{remark}

As we will see, since we also deal with uniqueness of solutions under some restrictive hypotheses,  the entropy setting better adapts with $L^1$-data. Firstly we precisely set what we mean by entropy solution for problem \eqref{pbmaingeneral} and, after that, we make some comments on the notion of solution.

\begin{defin}\label{entropydef}
	A measurable function $u$ which is almost everywhere finite in $\Omega$ and such that $T_k(u)\in W^{1,p}(\Omega)$ for all $k>0$ is an entropy solution to \eqref{pbmaingeneral} if $a(x, \nabla T_k(u))\in L^{\frac{p}{p-1}}(\Omega)^N$, $\lambda \sigma(u),h(u)g \in L^1(\partial\Omega)$
	and it holds
	\begin{equation}\label{def_ent}
		\begin{aligned}
			&\int_\Omega a(x,\nabla u)\cdot \nabla T_k(u-v) +  \int_{\partial\Omega} \lambda \sigma(u)T_k(u-v)d \mathcal{H}^{N-1} \\
		&= \int_\Omega fT_k(u-v) + \int_{\partial\Omega} h(u)gT_k(u-v)d \mathcal{H}^{N-1}	
		\end{aligned}
	\end{equation}
	for all $v \in W^{1,p}(\Omega)\cap L^\infty(\Omega)$ and all $k>0$.
\end{defin}

\begin{remark}
	Let us clarify the meaning of $\nabla u$ since we do not necessarily deal with functions in $W^{1,1}(\Omega)$.
	
	It is classical nowadays that from Lemma $2.1$ of \cite{b6} there exists  a unique measurable function $v$ such that
	$$\nabla T_k(u) = v \chi_{\{|u_n|\le k\}}$$
	for almost every $x\in \Omega$ and for every $k>0$. Moreover it is shown that $u\in W^{1,1}(\Omega)$ if and only if $v\in L^1(\Omega)$ and $v=\nabla u$ in the usual distributional sense.
	
	This motivates the choice of referring to the above cited function $v$ when dealing to the gradient of a function $u$ having only its truncations in a Sobolev space.
\end{remark}

\begin{remark}
	Let us stress that the first term on the left-hand of \eqref{def_ent} is finite. Indeed, $\nabla T_k(u-v)$ is different from zero only on $\{|u-v|<k\}$ where $|u|< \|v \|_{L^\infty(\Omega)} + k=: M$. Hence, since $T_M(u)\in  W^{1,p}(\Omega)$, we deduce $a(x,\nabla T_M(u))\in L^{\frac{p}{p-1}}(\Omega)^N$ and $\nabla T_k(T_M(u)-v)\in L^p(\Omega)^N$. Clearly, it is simple to convince that also all the other terms are well defined.
	Moreover, let us explicitly underline that it is easy to see that a solution $u\in W^{1,p}(\Omega)$ satisfying a formulation analogous to \eqref{weakdefregformulation} is also an entropy solution.
	Conversely, any entropy solution $u$ belonging to $W^{1,p}(\Omega)$ is also a solution in the sense of \eqref{weakdefregformulation} if $f\in L^{\frac{Np}{Np-N+p}}(\Omega)$.
\end{remark}

\medskip

Hence we state the existence result to \eqref{pbmaingeneral}.

\begin{theorem}\label{teoexgeneral}
	Let $a$ satisfy \eqref{cara1}, \eqref{cara2} and \eqref{cara3}. Let $0\le f\in L^{1}(\Omega)$, let $0\le \lambda \in L^{1}(\partial\Omega)$ not identically null and let  $0\le g\in L^1(\partial\Omega)$. Finally let $h$ satisfy \eqref{k} and \eqref{klim}. Then there exists a nonnegative entropy solution $u$ to \eqref{pbmaingeneral} such that $u\in M^{\frac{N(p-1)}{N-p}}(\Omega)$, $u\in M^{\frac{(N-1)(p-1)}{N-p}}(\partial\Omega)$ and $|\nabla u|\in M^{\frac{N(p-1)}{N-1}}(\Omega)$.
\end{theorem}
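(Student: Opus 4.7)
As in Section~\ref{sec_reg}, I proceed by approximation, but, following Remark~\ref{rem_dati}, I abandon any positive lower bound on the approximants on $\partial\Omega$ and instead control the singular boundary source directly by a test-function argument. Set
\begin{equation*}
f_n:=T_n(f),\quad g_n:=T_n(g),\quad \lambda_n:=T_n(\lambda),\quad h_n(s):=T_n\bigl(h(s^+ + 1/n)\bigr),
\end{equation*}
and solve the analogue of \eqref{pbapprox} in which $f,g,\lambda,h$ are replaced by $f_n,g_n,\lambda_n,h_n$ and $\sigma$ is evaluated at $u_n^+$. A Schauder fixed-point construction analogous to Lemma~\ref{lem_ex}, based on the Leray--Lions theory for $-\operatorname{div}(a(x,\nabla\cdot))$, yields a bounded nonnegative $u_n\in W^{1,p}(\Omega)$.

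\textbf{Key boundary estimate.} Testing the approximate equation with $(\delta-u_n)^+$ for $\delta>0$ small, \eqref{cara1} makes the diffusion contribution non-positive and leaves
\begin{equation*}
\int_{\partial\Omega} h_n(u_n) g_n (\delta-u_n)^+\,d\mathcal{H}^{N-1} \le \int_{\partial\Omega}\lambda_n\sigma(u_n)(\delta-u_n)^+\,d\mathcal{H}^{N-1}\le \delta\,M(\delta)\,\|\lambda\|_{L^1(\partial\Omega)},
\end{equation*}
with $M(\delta):=\max_{[0,\delta]}\sigma\to 0$ as $\delta\to 0^+$ by \eqref{sigma} and continuity. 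Since $(\delta-u_n)^+\ge \delta/2$ on $\{u_n\le\delta/2\}$, this shows that $h_n(u_n)g_n$ is equi-integrable on $\partial\Omega$ near $\{u_n=0\}$; on $\{u_n\ge\delta/2\}$, \eqref{klim} and continuity of $h$ on $(0,\infty)$ force $h_n(u_n)\le C_\delta$, so $\|h_n(u_n) g_n\|_{L^1(\partial\Omega)}\le C$ uniformly and the sequence is equi-integrable on $\partial\Omega$.

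\textbf{Entropy estimates, compactness and passage to the limit.} With the boundary source controlled in $L^1$, testing with $T_k(u_n)$ and dropping the nonnegative boundary term gives $\int_\Omega|\nabla T_k(u_n)|^p\le Ck$. The classical Boccardo--Gallou\"et scheme yields the announced Marcinkiewicz bounds on $u_n$ and $|\nabla u_n|$ in $\Omega$, while the trace inequality \eqref{trace} together with the compact embedding below $\frac{(N-1)p}{N-p}$ delivers the boundary Marcinkiewicz bound and, after a diagonal extraction on the truncation level, a.e. convergence $u_n\to u$ in $\Omega$ and on $\partial\Omega$. The Boccardo--Murat argument based on the monotonicity \eqref{cara3} upgrades this to a.e. convergence of $\nabla u_n$, so that $a(x,\nabla T_k(u_n))\rightharpoonup a(x,\nabla T_k(u))$ in $L^{p/(p-1)}(\Omega)^N$. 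Passage to the limit in \eqref{def_ent} for $u_n$ is then routine in the volume, while on the boundary Vitali and the equi-integrability above handle the singular source.

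\textbf{Main obstacle.} The most delicate step is the passage to the limit in $\int_{\partial\Omega}\lambda_n\sigma(u_n)T_k(u_n-v)\,d\mathcal{H}^{N-1}$: since no monotonicity or growth bound beyond \eqref{sigma} is available on $\sigma$, equi-integrability of $\lambda_n\sigma(u_n)$ on $\partial\Omega$ has to be produced separately. The global $L^1$ bound follows by testing with the constant $1$ and using the control on the boundary source already obtained; smallness at infinity follows from testing with $T_1((u_n-L)^+)$, which gives
\begin{equation*}
\int_{\{u_n\ge L+1\}\cap\partial\Omega}\lambda_n\sigma(u_n)\,d\mathcal{H}^{N-1}\le \int_\Omega f_n\chi_{\{u_n>L\}}+\int_{\partial\Omega} h_n(u_n)g_n\chi_{\{u_n>L\}}\to 0
\end{equation*}
as $L\to\infty$, uniformly in $n$, by the Marcinkiewicz bound on the trace of $u_n$ and absolute continuity of $f,g\in L^1$; on $\{u_n\le L\}$, equi-integrability follows from $\sigma(u_n)\le \max_{[0,L]}\sigma$ and $\lambda\in L^1(\partial\Omega)$.
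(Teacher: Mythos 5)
Your overall architecture (approximation, a test function supported near $\{u_n\le\delta\}$ to tame the singular boundary source, $L^1$ control of $\lambda_n\sigma(u_n)$ and $h_n(u_n)g_n$, a.e. convergence and limit passage) is the same as the paper's, and your key boundary estimate with $(\delta-u_n)^+$ is a correct variant of the paper's test with $V_\delta(u_n)$. However, there is a genuine gap at the step producing the Marcinkiewicz bounds. After testing with $T_k(u_n)$ you \emph{drop} the boundary absorption term and retain only $\int_\Omega|\nabla T_k(u_n)|^p\le Ck$. This is not enough: $u_n$ does not vanish on $\partial\Omega$, so neither the Boccardo--Gallou\"et scheme nor the trace inequality \eqref{trace} can be run from a gradient-only bound (both require the full $W^{1,p}(\Omega)$-norm; constants already show the gradient controls nothing by itself). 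This is exactly where the hypothesis \eqref{sigma}, which you never use in its lower-bound form $\sigma(s)\ge s^{p-1}$, enters in the paper: keeping the boundary term in the $T_k$-test and using $\lambda_n\ge\lambda_1=T_1(\lambda)$ gives $\int_{\partial\Omega}\lambda_1|T_k(u_n)|^p\,d\mathcal{H}^{N-1}\le Ck$, hence the full estimate \eqref{stimaTk} $\|T_k(u_n)\|_{W^{1,p}(\Omega)}\le Ck^{1/p}$ via the equivalence of $\|\cdot\|_{\lambda_1,p}$ with the $W^{1,p}$-norm; only then do the interior and boundary Marcinkiewicz bounds (and your subsequent equi-integrability argument for $\lambda_n\sigma(u_n)$, which relies on the trace Marcinkiewicz bound) follow. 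The repair is available within your own scheme: combine your ``test with the constant $1$'' bound $\int_{\partial\Omega}\lambda_n\sigma(u_n)\,d\mathcal{H}^{N-1}\le C$ with $\sigma(s)\ge s^{p-1}$ to get $\int_{\partial\Omega}\lambda_1|T_k(u_n)|^p\,d\mathcal{H}^{N-1}\le k\int_{\partial\Omega}\lambda_n\sigma(u_n)\,d\mathcal{H}^{N-1}\le Ck$, and then proceed as above; but as written the step fails.

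A secondary weakness: you pass to the limit in the diffusion term of \eqref{def_ent} using only a.e. convergence of gradients and weak $L^{p/(p-1)}$ convergence of $a(x,\nabla T_M(u_n))$. Since the quadratic-type term $\int_{\{|u_n-v|\le k\}}a(x,\nabla u_n)\cdot\nabla u_n$ is a pairing of two weakly convergent quantities, this only yields the entropy \emph{inequality} via Fatou, whereas Definition \ref{entropydef} asks for an equality. The paper closes this by proving strong convergence of $T_k(u_n)$ in $W^{1,p}(\Omega)$ (testing with $(T_k(u_n)-T_k(u))V_l(u_n)$ and invoking the Boccardo--Murat--Puel lemma, which again uses the monotonicity \eqref{cara3}); you should either establish this strong convergence or supply a Minty-type argument before calling the volume limit ``routine''.
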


Under some restrictive assumptions we show that there is at most one entropy solution to \eqref{pbmaingeneral}.
\begin{theorem}\label{teouniqueent}
Let $a$ satisfy \eqref{cara1}, \eqref{cara2} and \eqref{cara3} and let $\lambda, g \ge 0$ $\mathcal{H}^{N-1}$ almost everywhere on $\partial\Omega$. Finally assume that $\sigma(s)$ is increasing and $h(s)$ is nonincreasing with respect to $s$. Then there is at most one entropy solution to \eqref{pbmaingeneral}.
\end{theorem}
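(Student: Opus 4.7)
The plan is to follow a Bénilan–Boccardo–type uniqueness argument adapted to the Robin boundary condition. Let $u_1$ and $u_2$ be two entropy solutions of \eqref{pbmaingeneral}. First I would choose $v = T_h(u_2) \in W^{1,p}(\Omega)\cap L^\infty(\Omega)$ in the entropy formulation \eqref{def_ent} for $u_1$ and, symmetrically, $v = T_h(u_1)$ in the one for $u_2$; then I would add the two identities and let $h\to\infty$ (keeping $k$ fixed). For the right-hand side, dominated convergence makes the $f$-term vanish (since $T_k(u_1-T_h(u_2))+T_k(u_2-T_h(u_1))\to 0$ pointwise and is bounded by $2k$), while the $h$-boundary term tends to $\int_{\partial\Omega} g[h(u_1)-h(u_2)]\,T_k(u_1-u_2)\,d\mathcal{H}^{N-1}$, which is nonpositive because $h$ is nonincreasing and $T_k$ preserves sign. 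Similarly the $\lambda\sigma$-boundary term tends to $\int_{\partial\Omega} \lambda[\sigma(u_1)-\sigma(u_2)]\,T_k(u_1-u_2)\,d\mathcal{H}^{N-1}\ge 0$ since $\sigma$ is increasing.

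The main technical obstacle is the limit of the principal-part sum $A_1+A_2$, with $A_i:=\int_\Omega a(x,\nabla u_i)\cdot\nabla T_k(u_i-T_h(u_{3-i}))$. I would split each $A_i$ according to whether $|u_{3-i}|\le h$ (where $\nabla T_h(u_{3-i})=\nabla u_{3-i}$) or $|u_{3-i}|>h$ (where $\nabla T_h(u_{3-i})=0$). On the diagonal sets $\{|u_1-u_2|<k,\,|u_i|\le h\}$ the two contributions combine into $\int [a(x,\nabla u_1)-a(x,\nabla u_2)]\cdot(\nabla u_1-\nabla u_2)$, which by monotone convergence tends to $\int_{\{|u_1-u_2|<k\}}[a(x,\nabla u_1)-a(x,\nabla u_2)]\cdot\nabla(u_1-u_2)$ as $h\to\infty$. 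The remaining off-diagonal pieces involve $\int_{\{|u_{3-i}|>h\}} a(x,\nabla u_i)\cdot\nabla u_i\,\chi_{\{|u_i-T_h(u_{3-i})|<k\}}$; by \eqref{cara1} these are nonnegative, and using that $T_{k+h}(u_i)\in W^{1,p}(\Omega)$ with $|\{|u_{3-i}|>h\}|\to 0$ they contribute nonnegatively in the $\liminf$. Putting everything together, the passage to the limit yields
\begin{equation*}
\int_{\{|u_1-u_2|<k\}}[a(x,\nabla u_1)-a(x,\nabla u_2)]\cdot\nabla(u_1-u_2) + \int_{\partial\Omega}\lambda[\sigma(u_1)-\sigma(u_2)]T_k(u_1-u_2)\,d\mathcal{H}^{N-1} \le \int_{\partial\Omega} g[h(u_1)-h(u_2)]T_k(u_1-u_2)\,d\mathcal{H}^{N-1} \le 0.
\end{equation*}
Since the left-hand side is a sum of two nonnegative quantities, all three integrals must vanish.

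From the vanishing of the principal-part integral and the strict monotonicity \eqref{cara3}, I would deduce $\nabla u_1=\nabla u_2$ a.e.\ on $\{|u_1-u_2|<k\}$ for every $k>0$, so $\nabla T_k(u_1-u_2)=0$ in $\Omega$ and each $T_k(u_1-u_2)$ is a.e.\ constant on every connected component of $\Omega$; passing to the limit $k\to\infty$ (using that $u_1,u_2$ are a.e.\ finite) gives $u_1-u_2\equiv c$ for some constant $c$. The vanishing of the $\sigma$-boundary term then reads $\int_{\partial\Omega}\lambda[\sigma(u_2+c)-\sigma(u_2)]T_k(c)\,d\mathcal{H}^{N-1}=0$; if $c\neq 0$, choosing $0<k\le|c|$ makes $T_k(c)\neq 0$, forcing $\lambda[\sigma(u_2+c)-\sigma(u_2)]=0$ $\mathcal{H}^{N-1}$-almost everywhere on $\partial\Omega$. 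Because $\sigma$ is strictly increasing, the bracket is strictly positive, which contradicts the hypothesis that $\lambda$ is not identically null. Hence $c=0$ and $u_1=u_2$.
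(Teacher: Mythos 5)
Your overall strategy is the same as the paper's: the same pair of test functions $T_m(u_2)$, $T_m(u_1)$ in the two entropy formulations, adding the identities, letting the truncation level tend to infinity, and using the monotonicity of $\sigma$ and $h$ together with dominated convergence (legitimate since $\lambda\sigma(u_i),\,h(u_i)g\in L^1(\partial\Omega)$ by Definition \ref{entropydef}) for the boundary and $f$ terms. The gap is in the principal part. With your splitting of $A_1+A_2$, the pieces coming from $\{u_{3-i}>h\}$ are indeed nonnegative by \eqref{cara1}, but the pieces on $\{u_{3-i}\le h\}$ combine into the monotone quantity only on the set where \emph{both} $u_1\le h$ and $u_2\le h$. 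What is left over are the two cross terms
\begin{equation*}
\int_{\{|u_1-u_2|<k,\; u_2\le h<u_1\}} a(x,\nabla u_1)\cdot(\nabla u_1-\nabla u_2)
\qquad\text{and}\qquad
\int_{\{|u_1-u_2|<k,\; u_1\le h<u_2\}} a(x,\nabla u_2)\cdot(\nabla u_2-\nabla u_1),
\end{equation*}
which contain $-a(x,\nabla u_1)\cdot\nabla u_2$ (respectively $-a(x,\nabla u_2)\cdot\nabla u_1$) and are not sign-definite, so they cannot be absorbed into the ``nonnegative in the liminf'' part of your argument. They live on the strips $\{h<u_1<h+k,\ h-k<u_2\le h\}$ (and the symmetric one), and to make them vanish as $h\to\infty$ one needs the slice-energy estimate $\lim_{h\to\infty}\int_{\{h-k<u_i<h+k\}}|\nabla u_i|^p=0$, obtained by taking $v=T_{h-k}(u_i)$ in the entropy formulation of $u_i$ itself (here $\lambda\sigma(u_i)\ge0$ and the integrability of $f$ and $h(u_i)g$ make the right-hand side vanish), combined with \eqref{cara2} and H\"older. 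This is exactly the content of Theorem $5.1$ of \cite{b6}, which the paper invokes at this point; your sketch instead asserts that everything outside the diagonal set is nonnegative, which is not true.

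Your endgame also differs from the paper's (which deduces $\lambda(\sigma(u_1)-\sigma(u_2))T_k(u_1-u_2)=0$ on $\partial\Omega$ and concludes through the equivalence of $\|\cdot\|_{\lambda,p}$ with the $W^{1,p}$ norm), and it contains an unjustified step: to pass from ``$\nabla u_1=\nabla u_2$ a.e.\ on $\{|u_1-u_2|<k\}$'' to ``$T_k(u_1-u_2)$ is locally constant'' you need to know that $T_k(u_1-u_2)$ is a Sobolev function whose gradient is $(\nabla u_1-\nabla u_2)\chi_{\{|u_1-u_2|<k\}}$; since $u_1,u_2$ need not belong to $W^{1,1}(\Omega)$, this is not automatic and again requires the strip estimates above. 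Moreover, if $\Omega$ is not connected you only get a constant on each component, so the contradiction with $\lambda$ not identically null must be run componentwise. These points are fixable, but as written the crucial step of your proposal, the limit of the principal part, is incomplete.
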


\subsection{Approximation scheme and a priori estimates}

Once again we work by approximation through the following scheme:
\begin{equation}
	\label{pbapproxgen}
	\begin{cases}
		\displaystyle -\operatorname{div}( a(x,\nabla u_n)) = f_n & \text{ in }\Omega,\\
		\displaystyle a(x,\nabla u_n)\cdot \nu  + \lambda_n \sigma_n(u_n) = h_n(u_n)g_n & \text{ on } \partial\Omega,
	\end{cases}
\end{equation}
where $f_n:= T_n(f), \lambda_n:=T_n(\lambda), \sigma_n(s) := T_n(\sigma(s)), h_n(s) := T_n(h(s)), g_n:=T_n(g)$. We start proving the existence of a solution $u_n$ to \eqref{pbapproxgen}.
\begin{lemma}\label{lem_exgen}
	Let $a$ satisfy \eqref{cara1}, \eqref{cara2} and \eqref{cara3}. Let $0\le f\in L^{1}(\Omega)$, $0\le \lambda \in L^{1}(\partial\Omega)$ not identically null and let $0\le g\in L^1(\partial\Omega)$. Let $h$ satisfy \eqref{k} and \eqref{klim} and finally let $\sigma$ satisfy \eqref{sigma}. Then there exists a nonnegative weak solution $u_n\in H^1(\Omega)\cap L^\infty(\Omega)$ to \eqref{pbapproxgen}.
\end{lemma}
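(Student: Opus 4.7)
The approach mirrors that of Lemma \ref{lem_ex}: apply the Schauder fixed point theorem to a suitable auxiliary linearized problem. The key simplification here is that every coefficient is truncated, so $f_n,g_n,\lambda_n,\sigma_n,h_n\le n$ are uniformly bounded, the singular behavior of $h$ at zero is removed and $\sigma$ is kept sublinear; in particular the boundary source $h_n(v)g_n-\lambda_n\sigma_n(v^+)$ is bounded by $n^2$.

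Fix $n\in\mathbb N$. For $v\in L^p(\partial\Omega)$ consider the auxiliary problem
\begin{equation*}
\begin{cases}
-\operatorname{div}(a(x,\nabla w))=f_n &\text{in }\Omega,\\
a(x,\nabla w)\cdot\nu+\lambda_n|w|^{p-2}w=\lambda_n|v|^{p-2}v-\lambda_n\sigma_n(v^+)+h_n(v^+)g_n &\text{on }\partial\Omega.
\end{cases}
\end{equation*}
The extra term $\lambda_n|w|^{p-2}w$ is added on the left (and compensated by $\lambda_n|v|^{p-2}v$ on the right) precisely to obtain coercivity; note that, since $\lambda_n\in L^\infty(\partial\Omega)$ is not identically null for $n$ sufficiently large, the norm $\|\cdot\|_{\lambda_n,p}$ is equivalent to the $W^{1,p}(\Omega)$ norm. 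The operator on $W^{1,p}(\Omega)$ induced by the left-hand side is bounded by \eqref{cara2}, hemicontinuous, strictly monotone by \eqref{cara3} together with the monotonicity of $s\mapsto|s|^{p-2}s$, and coercive on $W^{1,p}(\Omega)$; the right-hand side belongs to $L^{p'}(\partial\Omega)$, hence to $W^{1,p}(\Omega)^*$. By Leray-Lions there is a unique solution $w=w_v\in W^{1,p}(\Omega)$, which turns out to belong to $L^\infty(\Omega)$ by a standard Stampacchia iteration with test function $G_k(w)=(w-k)^+$.

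Next I set $T(v):=w_v|_{\partial\Omega}$ and verify the hypotheses of Schauder's theorem on $L^p(\partial\Omega)$. Choosing $w_v$ as test function, using \eqref{cara1}, \eqref{sigma}, the equivalence of norms, Young's inequality and the trace inequality \eqref{trace} one shows $\|w_v\|_{W^{1,p}(\Omega)}\le C_n(1+\|v\|_{L^p(\partial\Omega)}^{p-1})^{1/(p-1)}$, so a sufficiently large ball of $L^p(\partial\Omega)$ is $T$-invariant. Compactness follows from the compact embedding $W^{1,p}(\Omega)\hookrightarrow L^p(\partial\Omega)$ applied to a bounded set $A$: the corresponding $w_v$ are uniformly bounded in $W^{1,p}(\Omega)$. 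Continuity is obtained by the classical Minty-type argument: if $v_k\to v$ in $L^p(\partial\Omega)$, the bounds yield weak convergence $w_{v_k}\rightharpoonup w$ in $W^{1,p}(\Omega)$ and a.e., a.e. convergence of the right-hand sides, and \eqref{cara3} lets us identify the limit of $a(\cdot,\nabla w_{v_k})$ with $a(\cdot,\nabla w_v)$. Schauder then supplies a fixed point $u_n\in W^{1,p}(\Omega)\cap L^\infty(\Omega)$; because at $v=u_n$ the two $|u_n|^{p-2}u_n$ terms cancel, $u_n$ satisfies
\begin{equation*}
\int_\Omega a(x,\nabla u_n)\cdot\nabla\varphi+\int_{\partial\Omega}\lambda_n\sigma_n(u_n^+)\varphi\,d\mathcal H^{N-1}=\int_\Omega f_n\varphi+\int_{\partial\Omega}h_n(u_n^+)g_n\varphi\,d\mathcal H^{N-1}
\end{equation*}
for every $\varphi\in W^{1,p}(\Omega)$.

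Finally I prove $u_n\ge 0$. Testing with $u_n^-\in W^{1,p}(\Omega)\cap L^\infty(\Omega)$ and using that $\sigma_n(u_n^+)u_n^-=0$ a.e. on $\partial\Omega$ (since $u_n^-\neq 0$ forces $u_n^+=0$ and $\sigma(0)=0$), \eqref{cara1} yields $\alpha\int_\Omega|\nabla u_n^-|^p\le -\int_\Omega f_n u_n^- -\int_{\partial\Omega}h_n(u_n^+)g_n u_n^-\le 0$, whence $u_n^-$ is constant; testing again and using the nonnegativity of the data one obtains $u_n^-\equiv 0$. Replacing $u_n^+$ with $u_n$ recovers \eqref{pbapproxgen} in weak form and concludes the proof. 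The main obstacle is precisely the design of the auxiliary linearization: since neither $\sigma_n$ nor $h_n$ is monotone, both must be frozen at $v$, but doing so destroys the boundary coercivity; introducing the artificial pair $\lambda_n|w|^{p-2}w-\lambda_n|v|^{p-2}v$ restores coercivity without changing the fixed-point equation.
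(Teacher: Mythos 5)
Your overall strategy (freeze part of the boundary nonlinearity, solve a coercive auxiliary problem by Leray--Lions, and close with Schauder on $L^p(\partial\Omega)$) is the same as the paper's, but the specific linearization you chose introduces a genuine gap at the invariance step. Because you freeze \emph{both} $\sigma_n$ and $h_n$ at $v$ and compensate the artificial absorption $\lambda_n|w|^{p-2}w$ with the datum $\lambda_n|v|^{p-2}v$, the right-hand side of your auxiliary problem grows critically in $v$: testing with $w$ and using \eqref{cara1}, H\"older, Young, the equivalence of $\|\cdot\|_{\lambda_n,p}$ with the $W^{1,p}$-norm and the trace inequality \eqref{trace}, the best you can obtain is exactly the bound you state, $\|w_v\|_{W^{1,p}(\Omega)}\le C_n\bigl(1+\|v\|_{L^p(\partial\Omega)}^{p-1}\bigr)^{1/(p-1)}$, i.e. $\|T(v)\|_{L^p(\partial\Omega)}\le C_n\|v\|_{L^p(\partial\Omega)}+C_n'$. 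This does \emph{not} imply that a large ball is $T$-invariant: for large $R$ the bound behaves like $C_nR$, and the constant $C_n$ is a product of the trace constant, the (possibly large, $\lambda$-degenerate) norm-equivalence constant, $1/\alpha$ and factors of $n$, so there is no reason for it to be $\le 1$; even working in the weighted space $L^p(\partial\Omega;\lambda_n\,d\mathcal H^{N-1})$, where the critical term can be absorbed with constant exactly $1$, the remaining lower-order terms re-enter through the full norm and one only gets $\|T(v)\|\le(1+\delta)R$, never $\le R$. Hence Schauder cannot be invoked as written; this is precisely why the paper freezes \emph{only} the bounded term $h_n(|v|)g_n\le n^2$ and keeps the absorption $\lambda_n\sigma_n(|\cdot|)$ in the unknown $w$: then the energy estimate for $w$ is independent of $v$, and invariance of a sufficiently large ball is immediate (the same principle as in Lemma \ref{lem_ex}, where the frozen term is bounded by $n^{\eta+1}$). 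To salvage your scheme you would either have to freeze only bounded quantities, or replace Schauder by a Leray--Schauder/Schaefer argument with a priori bounds along the homotopy $v=tT(v)$, exploiting the favourable sign of $\lambda_n(t^{1-p}-1)|v|^{p-2}v$; neither is carried out in your proposal.

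A secondary, smaller issue: since you put $\sigma_n(u_n^+)$ (which vanishes on $\{u_n\le 0\}$) in the fixed-point equation, testing with $u_n^-$ only yields $\nabla u_n^-\equiv 0$, and ``testing again'' does not exclude a negative constant in the degenerate case $f\equiv 0$, $h(0)g\equiv 0$; in the paper's auxiliary problem the absorption is $\lambda_n\sigma_n(|w|)$, so the boundary term acts on the negative part as well and forces $w^-\equiv 0$ directly (as does the linear term $\lambda u$ in Lemma \ref{lem_ex}). This is easy to repair, but as stated the nonnegativity argument is incomplete.
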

\begin{proof}
Let us provide a very brief idea of the proof.\\
The existence of $w\in W^{1,p}(\Omega)$ nonnegative solution to
	\begin{equation*}
		\label{fixedpoint}
		\begin{cases}
			\displaystyle -\operatorname{div}( a(x,\nabla w)) = f_n & \text{ in }\Omega,\\
			\displaystyle a(x,\nabla w)\cdot \nu  + \lambda_n \sigma_n(|w|) = h_n(|v|)g_n & \text{ on } \partial\Omega,
		\end{cases}
	\end{equation*}
where $v\in L^p(\partial\Omega)$ follows from \cite{ll}. Then a very similar reasoning to the one of Lemma \ref{lem_ex} gives that the application $T: L^p(\partial\Omega)\mapsto L^p(\partial\Omega)$ such that $T(v)=w\big|_{\partial\Omega}$ has a fixed point. The main difference, apart from the estimates in which one heavily uses \eqref{cara1}, lies in the continuity request; if  $w_k\big|_{\partial\Omega}= T(v_k)$ and $v_k$ converges to $v$ in $L^p(\partial\Omega)$, in this case one has also to show that  $\nabla w_k$ converges almost everywhere to some $\nabla w$ in $\Omega$ to pass to the limit the principal operator in order to have that $w\big|_{\partial\Omega}= T(v)$. Since $f_n$ is independent of $k$ and $h_n(|v|)g_n\le n^2$ one can reason as in Lemma \ref{lem_conv} below in order to deduce the desired convergence. Then the continuity part is analogous to the one proven in Lemma \ref{lem_ex}.
\end{proof}

Let us now show some a priori estimates for $u_n$ in $n$.

\begin{lemma}\label{lem_priorigenlambda}
	Under the assumptions of Lemma \ref{lem_exgen} let $u_n$ be a solution to \eqref{pbapproxgen}. Then it holds:
	\begin{equation}\label{stimaassorbimentogen}
		\int_{\{u_n>t\}} \lambda_n \sigma_n(u_n) d \mathcal{H}^{N-1} \le \int_{\{u_n>t\}} f_n + \int_{\{u_n>t\}} h_n(u_n)g_n d \mathcal{H}^{N-1}, \ \ \forall t>0.
	\end{equation}
It holds that $\lambda_n \sigma_n(u_n)$ is bounded in $L^1(\partial\Omega)$, $u_n$ is bounded in $M^{\frac{N(p-1)}{N-p}}(\Omega)$ and $M^{\frac{(N-1)(p-1)}{N-p}}(\partial\Omega)$ and $|\nabla u_n|$ is bounded in $M^{\frac{N(p-1)}{N-1}}(\Omega)$ with respect to $n$. Moreover $h_n(u_n)g_n$ is bounded in $L^1(\partial\Omega)$ with respect to $n$. In particular it holds
\begin{equation}\label{stimaTk}
	\|T_k(u_n)\|_{W^{1,p}(\Omega)} \le Ck^{\frac{1}{p}},
\end{equation}
for some positive constant $C$ which does not depend on $n$.
\end{lemma}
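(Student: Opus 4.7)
The plan is to exploit three successive test-function choices in the weak formulation of \eqref{pbapproxgen}---namely $\phi_{t,\varepsilon}(u_n)$, $V_\delta(u_n)$, and $T_k(u_n)$---and then derive the Marcinkiewicz estimates by the standard Boccardo--Gallou\"et scheme. For \eqref{stimaassorbimentogen} I would test with $\phi_{t,\varepsilon}(u_n)\in W^{1,p}(\Omega)$ (see \eqref{phit}). Since $\phi_{t,\varepsilon}$ is nondecreasing, the volume integrand $\phi'_{t,\varepsilon}(u_n)\,a(x,\nabla u_n)\cdot\nabla u_n$ is nonnegative by \eqref{cara1} and can be dropped; sending $\varepsilon\to 0^+$ and applying dominated convergence then yields \eqref{stimaassorbimentogen}.

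Next, to bound $h_n(u_n)g_n$ in $L^1(\partial\Omega)$, I would test with $V_{s_1}(u_n)$ (see \eqref{Vdelta}), with $s_1$ as in \eqref{k}. Because $V_{s_1}$ is nonincreasing, the volume term is nonpositive and rearranging the equation gives
\[
\int_{\partial\Omega} h_n(u_n)g_n\,V_{s_1}(u_n)\,d\mathcal{H}^{N-1}\le\int_{\partial\Omega}\lambda_n\sigma_n(u_n)\,V_{s_1}(u_n)\,d\mathcal{H}^{N-1}.
\]
Since $V_{s_1}$ vanishes on $\{u_n\ge 2s_1\}$ and $\sigma$ is continuous, the right-hand side is bounded uniformly in $n$ by $(\sup_{[0,2s_1]}\sigma)\,\|\lambda\|_{L^1(\partial\Omega)}$, controlling $h_n(u_n)g_n$ on $\{u_n\le s_1\}$. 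On $\{u_n>s_1\}$, \eqref{klim} combined with continuity of $h$ on $[s_1,\infty)$ forces $h(u_n)\le C$, so $\int_{\{u_n>s_1\}}h_n(u_n)g_n\le C\|g\|_{L^1(\partial\Omega)}$. The $L^1$-bound on $\lambda_n\sigma_n(u_n)$ then follows by sending $t\to 0^+$ in \eqref{stimaassorbimentogen} and applying monotone convergence (note that $\sigma_n(0)=0$).

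For \eqref{stimaTk} I would test with $T_k(u_n)\in W^{1,p}(\Omega)\cap L^\infty(\Omega)$: coercivity \eqref{cara1} together with the bound $T_k(u_n)\le k$ on the right-hand side gives $\alpha\int_\Omega|\nabla T_k(u_n)|^p\le Ck$, and the absorption term $\int_{\partial\Omega}\lambda_n\sigma_n(u_n)T_k(u_n)$ is also bounded by $Ck$. Using \eqref{sigma} (which makes $\sigma_n(u_n)T_k(u_n)$ pointwise dominate $T_k(u_n)^p$ once $n$ is sufficiently large relative to $k$) and the equivalence of $\|\cdot\|_{\lambda,p}$ with the $W^{1,p}$-norm, one extracts \eqref{stimaTk}. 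The Marcinkiewicz estimates then follow in the classical way: \eqref{stimaTk} and Sobolev (resp.\ trace) embeddings give $k^{p^*}|\{u_n>k\}|\le C k^{p^*/p}$, hence the Marcinkiewicz bound on $u_n$ in $\Omega$ with index $N(p-1)/(N-p)$ and its boundary analogue; the gradient estimate is obtained by splitting $\{|\nabla u_n|>\tau\}\subset\{u_n>k\}\cup\{|\nabla T_k(u_n)|>\tau\}$ and optimizing over $k$.

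The main obstacle I anticipate is precisely the $L^1$-bound on the singular boundary source $h_n(u_n)g_n$: the blow-up of $h$ at zero against the merely integrable $g$ rules out any pointwise majoration uniform in $n$, and the Stampacchia/Hopf argument of Section \ref{sec_reg} is unavailable because $\lambda$ is no longer bounded and may vanish on $\partial\Omega$. The decisive idea is that the singular source and the absorption term are paired on the same boundary integral, and the test function $V_{s_1}(u_n)$ transfers the estimate from the source onto the absorption, a quantity in turn controlled by the continuity of $\sigma$ at zero (namely $\sigma(0)=0$ in \eqref{sigma}).
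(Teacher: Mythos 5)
Your proposal is correct and follows essentially the same route as the paper: the same test functions $\phi_{t,\varepsilon}(u_n)$ and $V_\delta(u_n)$ (you fix $\delta=s_1$), the same splitting of the boundary source into $\{u_n\le s_1\}$ and its complement using \eqref{klim}, and the standard Marcinkiewicz machinery; your reordering (bounding $h_n(u_n)g_n$ first by estimating $\lambda_n\sigma_n(u_n)V_{s_1}(u_n)$ directly through $\sup_{[0,2s_1]}\sigma$ and $\|\lambda\|_{L^1(\partial\Omega)}$, then letting $t\to0^+$ in \eqref{stimaassorbimentogen} to get the $L^1$-bound on $\lambda_n\sigma_n(u_n)$) is consistent and avoids any circularity, and testing with $T_k(u_n)$ instead of the paper's $T_k(u_n)-k$ is a harmless variant since by then the right-hand side is controlled by $Ck$. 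One detail to adjust in the last step: you cannot invoke the norm $\|\cdot\|_{\lambda,p}$, both because $\lambda$ is only in $L^1(\partial\Omega)$ and, more to the point, because $\lambda_n=T_n(\lambda)\le\lambda$, so $\int_{\partial\Omega}\lambda_n\sigma_n(u_n)T_k(u_n)\,d\mathcal{H}^{N-1}$ does not dominate $\int_{\partial\Omega}\lambda\,|T_k(u_n)|^p\,d\mathcal{H}^{N-1}$; instead use $\lambda_n\ge\lambda_1=T_1(\lambda)$ (bounded and not identically null) together with the lower bound $\sigma_n(u_n)T_k(u_n)\ge |T_k(u_n)|^p$ for $n$ large, and conclude via the equivalence of $\|\cdot\|_{\lambda_1,p}$ with the $W^{1,p}$-norm, exactly as the paper does.
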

\begin{proof}
	Let $t,\varepsilon>0$ and let us take $\phi_{t,\varepsilon}(u_n)$ ($\phi_{t,\varepsilon}$ is defined in \eqref{phit}) as a test function in the weak formulation of  \eqref{pbapproxgen} yielding to (recall \eqref{cara1})

	\begin{equation}\label{lemmapriori1gen}
	\begin{aligned}
		&\alpha\int_\Omega |\nabla u_n|^p \phi'_{t,\varepsilon}(u_n)  + \int_{\partial\Omega} \lambda_n \sigma_n(u_n) \phi_{t,\varepsilon}(u_n) d \mathcal{H}^{N-1}
		\\
		&\le \int_{\Omega} f_n \phi_{t,\varepsilon}(u_n) + \int_{\partial\Omega} h_n(u_n)g_n\phi_{t,\varepsilon}(u_n) d \mathcal{H}^{N-1}.
	\end{aligned}
	\end{equation}

	Since $\phi_{t,\varepsilon}$ is nondecreasing, it is different from zero only on $\{u_n>t\}$ one gets
	$$
	\int_{\partial\Omega} \lambda_n \sigma_n(u_n) \phi_{t,\varepsilon}(u_n) d \mathcal{H}^{N-1} \le \int_{\{u_n>t\}} f_n + \int_{\{u_n>t\}} h_n(u_n)g_n d \mathcal{H}^{N-1},
	$$
	and \eqref{stimaassorbimentogen} is obtained by an application of the Fatou Lemma as $\varepsilon\to 0^+$. Let also highlight that the right hand of \eqref{lemmapriori1gen} is bounded by a constant which is independent of $n$:
	$$\int_{\{u_n>t\}} f_n + \int_{\{u_n>t\}} h_n(u_n)g_n d \mathcal{H}^{N-1} \le
	\int_{\Omega} f+ \sup_{s\in(t,\infty)}h(s) \int_{\partial\Omega} g d \mathcal{H}^{N-1}=c(t).$$
	This implies
	\begin{equation*}
	\begin{aligned}\int_{\partial\Omega} \lambda_n \sigma_n(u_n) d \mathcal{H}^{N-1}&=\int_{\{u_n\le 1\}} \lambda_n \sigma_n(u_n)  d \mathcal{H}^{N-1}+\int_{\{u_n>1\}} \lambda_n \sigma_n(u_n)  d \mathcal{H}^{N-1}
	\\
	&\le \max_{s\in[0,1]}\sigma(s)\int_{\partial\Omega} \lambda d \mathcal{H}^{N-1} +c(1)
	\end{aligned}
	\end{equation*}
	so that $\lambda_n \sigma(u_n)$ is bounded in $L^1(\partial\Omega)$ with respect to $n$.
	\\
	Now we focus on the Sobolev estimate for $u_n$. Let us take $T_k(u_n)-k$ as a test function in the weak formulation of  \eqref{pbapproxgen} yielding to
	\begin{equation*}
		\alpha\int_\Omega |\nabla T_k(u_n)|^p + \int_{\partial\Omega} \lambda_n \sigma_n(u_n) (T_k(u_n)-k) d \mathcal{H}^{N-1} \le 0,
	\end{equation*}
	which means that
	\begin{equation*}
		\int_\Omega |\nabla T_k(u_n)|^p \le
		\frac{k}{\alpha}\int_{\partial\Omega} \lambda_n \sigma_n(u_n)d \mathcal{H}^{N-1}  \le Ck,
	\end{equation*}
	where $C$ does not depend on $n$ since $\lambda_n \sigma_n(u_n)$ is bounded in $L^1(\partial\Omega)$ with respect to $n$.
	Then we have shown that
	\begin{equation*}\label{stimatroncate}
		\int_\Omega |\nabla T_k(u_n)|^p + \int_{\partial\Omega} \lambda_n \sigma_n(u_n)T_k(u_n) \le Ck, \ \ \forall k>0.
	\end{equation*}
	Thus, recalling \eqref{sigma} and that $\lambda_n\ge \lambda_1$, the previous implies that for any $k>0$ \eqref{stimaTk} holds
	since $\|\cdot\|_{\lambda_1,p}$ and $\|\cdot\|_{W^{1,p}(\Omega)}$ are equivalent.
	
	It follows from classical arguments that $u_n$ is bounded in $M^{\frac{N(p-1)}{N-p}}(\Omega)$ and $|\nabla u_n|$ is bounded in $M^{\frac{N(p-1)}{N-1}}(\Omega)$ with respect to $n$ (see for instance \cite{b6}).
	
	\medskip
	
	Here we briefly sketch the boundedness in $M^{\frac{(N-1)(p-1)}{N-p}}(\partial\Omega)$. It follows from \eqref{stimaTk} and \eqref{trace} that
	
	\begin{equation*}
			 k\mathcal{H}^{N-1}(\{x\in \partial\Omega: u_n\ge k\})^{\frac{N-p}{(N-1)p}} \le  \|T_k(u_n)\|_{L^{\frac{(N-1)p}{N-p}}(\partial\Omega)}  \le Ck ^{\frac{1}{p}},
	\end{equation*}
	which implies that
		\begin{equation*}
		\mathcal{H}^{N-1}(\{x\in \partial\Omega: u_n\ge k\}) \le  \frac{C}{k^{\frac{(N-1)(p-1)}{N-p}}},
	\end{equation*}
	namely $u_n$ is bounded in $M^{\frac{(N-1)(p-1)}{N-p}}(\partial\Omega)$ with respect to $n$.
	
	\medskip
	
	Now we show that $h_n(u_n)g_n$ is bounded in $L^1(\partial\Omega)$ with respect to $n$.	
	Let us take $V_\delta(u_n)$ ($V_\delta$ is defined in \eqref{Vdelta}) as a test function in the weak formulation of \eqref{pbapproxgen}. This takes to
	\begin{equation*}
		\begin{aligned}
			&\int_\Omega a(x,\nabla u_n)\cdot \nabla u_n V'_\delta(u_n)  + \int_{\partial\Omega} \lambda_n \sigma_n(u_n) V_\delta(u_n) d \mathcal{H}^{N-1}
			\\
			&= \int_{\Omega} f_n V_\delta(u_n) + \int_{\partial\Omega} h_n(u_n)g_n V_\delta(u_n) d \mathcal{H}^{N-1}.
		\end{aligned}
	\end{equation*}
	Now recalling that $f_n\ge 0$, $V'_\delta(s)\le 0$ and that \eqref{cara1} holds, the previous implies
	\begin{equation*}
		\begin{aligned}
			\int_{\{u_n\le \delta\}} h_n(u_n)g_n d \mathcal{H}^{N-1} &\le \int_{\partial\Omega} h_n(u_n)g_n V_\delta(u_n) d \mathcal{H}^{N-1}
			\\
			&\le \int_{\partial\Omega} \lambda_n \sigma_n(u_n) V_\delta(u_n) d \mathcal{H}^{N-1} \le C,
		\end{aligned}
	\end{equation*}
	since $\lambda_n \sigma_n(u_n)$ is bounded in $L^1(\partial\Omega)$. This concludes the proof.
\end{proof}

\subsection{Convergence results}
This subsection is devoted to the proof of the convergence results concerning $u_n$ which are needed to prove Theorem \ref{teoexgeneral}.

\begin{lemma}\label{lem_conv}
	Under the assumptions of Lemma \ref{lem_exgen} let $u_n$ be a solution to \eqref{pbapproxgen}. Then $u_n$ converges (up to a subsequence) almost everywhere in $\Omega$ and $\mathcal{H}^{N-1}$ almost everywhere in $\partial\Omega$ as $n\to\infty$ to a function $u$ which is almost everywhere finite in $\Omega$ and on $\partial\Omega$.
	\\
	Moreover $\lambda_n\sigma_n(u_n)$ and $h_n(u_n)g_n$ converge in $L^1(\partial\Omega)$ respectively to $\lambda\sigma(u)$ and $h(u)g$ as $n\to\infty$.
	\\
	Finally $T_k(u_n)$ converges to $T_k(u)$ strongly in $W^{1,p}(\Omega)$ as $n\to\infty$ and for every $k>0$.
\end{lemma}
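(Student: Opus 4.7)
My plan is to split the lemma into three parts: (i) a.e.\ convergence of $u_n$ with a.e.\ finiteness of the limit; (ii) $L^1$-convergence of the boundary nonlinearities via Vitali; (iii) strong $W^{1,p}$-convergence of the truncations via a Boccardo--Murat monotonicity argument.

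\textbf{A.e.\ convergence.} From Lemma~\ref{lem_priorigenlambda}, $\|T_k(u_n)\|_{W^{1,p}(\Omega)}\le Ck^{1/p}$ and $u_n$ is uniformly bounded in the Marcinkiewicz spaces on $\Omega$ and on $\partial\Omega$. Rellich--Kondrachov in $\Omega$ and the compact trace embedding into $L^q(\partial\Omega)$ with $q<(N-1)p/(N-p)$, together with a diagonal extraction over $k\in\mathbb{N}$, produce a single subsequence along which $T_k(u_n)\to v_k$ a.e.\ in $\Omega$ and $\mathcal{H}^{N-1}$-a.e.\ on $\partial\Omega$. The Marcinkiewicz bounds force $|\{u_n>k\}|$ and $\mathcal{H}^{N-1}(\{u_n>k\}\cap\partial\Omega)$ to vanish uniformly in $n$ as $k\to\infty$, so $\{u_n\}$ is Cauchy in measure both in $\Omega$ and on $\partial\Omega$; hence it has an a.e.-finite a.e.\ limit $u$ with $v_k=T_k(u)$.

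\textbf{$L^1$-convergence of boundary nonlinearities.} For $\lambda_n\sigma_n(u_n)$, I would verify equi-integrability by splitting, for $E\subset\partial\Omega$ and $t>0$,
\[
\int_E \lambda_n\sigma_n(u_n)\,d\mathcal{H}^{N-1}\le \max_{[0,t]}\sigma\,\int_E\lambda\,d\mathcal{H}^{N-1}+\int_{\{u_n>t\}}\lambda_n\sigma_n(u_n)\,d\mathcal{H}^{N-1}.
\]
The first summand is small for $|E|$ small by absolute continuity of $\int\lambda$, while the second is controlled via \eqref{stimaassorbimentogen} and becomes uniformly small as $t\to\infty$: the term $\int_{\{u_n>t\}}f_n$ vanishes uniformly because $f\in L^1(\Omega)$ and $|\{u_n>t\}|\to 0$ uniformly, and $\int_{\{u_n>t\}}h_n(u_n)g_n$ vanishes uniformly because \eqref{klim} makes $h$ bounded at infinity and $g\in L^1(\partial\Omega)$ together with the uniform shrinkage of $\{u_n>t\}\cap\partial\Omega$. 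For $h_n(u_n)g_n$ I would instead split at a small level $\delta$: on $\{u_n>\delta\}$ the integrand is bounded by $(\sup_{[\delta,\infty)}h)g$, small on small sets; on $\{u_n\le\delta\}$, testing \eqref{pbapproxgen} with $V_\delta(u_n)$ exactly as in the last part of Lemma~\ref{lem_priorigenlambda} yields
\[
\int_{\{u_n\le\delta\}}h_n(u_n)g_n\,d\mathcal{H}^{N-1}\le \max_{[0,2\delta]}\sigma\,\int_{\partial\Omega}\lambda\,d\mathcal{H}^{N-1},
\]
which tends to zero as $\delta\to 0^+$ uniformly in $n$ because $\sigma$ is continuous with $\sigma(0)=0$. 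A Fatou-type argument with the same inequality forces $g=0$ $\mathcal{H}^{N-1}$-a.e.\ on $\{u=0\}$ whenever $h(0^+)=+\infty$, so $h(u)g$ is unambiguously defined, lies in $L^1(\partial\Omega)$ by Fatou, and $h_n(u_n)g_n\to h(u)g$ a.e.; Vitali then yields the two $L^1$-convergences.

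\textbf{Strong convergence of $T_k(u_n)$.} This is the main technical obstacle. I would follow the classical Boccardo--Murat trick: test \eqref{pbapproxgen} with $\phi(T_k(u_n)-T_k(u))$, where $\phi(s)=se^{\gamma s^2}$ and $\gamma$ is chosen large enough that $\alpha\phi'(s)-\beta|\phi(s)|\ge\alpha/2$ for $|s|\le 2k$. Since $\phi(T_k(u_n)-T_k(u))$ is uniformly bounded and converges to $0$ a.e., dominated convergence together with the $L^1$-convergences from the previous step shows that the source integral $\int_\Omega f_n\phi(\cdot)$ and the two boundary integrals $\int_{\partial\Omega}\lambda_n\sigma_n(u_n)\phi(\cdot)$ and $\int_{\partial\Omega}h_n(u_n)g_n\phi(\cdot)$ all vanish in the limit, leaving
\[
\int_\Omega a(x,\nabla u_n)\cdot\nabla(T_k(u_n)-T_k(u))\,\phi'(T_k(u_n)-T_k(u))\,dx\to 0.
\]
The real difficulty is the contribution on $\{u_n>k\}$, where $\nabla u_n$ is controlled only in the Marcinkiewicz space $M^{N(p-1)/(N-1)}(\Omega)$ and not in $L^p$; the standard localisation via $T_{k+\varepsilon}(u_n)-T_k(u_n)$ (letting $\varepsilon\to 0^+$) and the fact that $\nabla T_k(u)=0$ on $\{u>k\}$ combined with $|\{u_n>k\}\cap\{u\le k\}|\to 0$ upgrades this term into an actual $L^1$-vanishing by equi-integrability. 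The remaining contribution is bounded below, via the $\gamma$-trick and \eqref{cara3}, by $\int_\Omega(a(x,\nabla T_k(u_n))-a(x,\nabla T_k(u)))\cdot(\nabla T_k(u_n)-\nabla T_k(u))\,dx$; this difference must therefore vanish in $L^1(\Omega)$, which by the monotonicity \eqref{cara3} forces $\nabla T_k(u_n)\to\nabla T_k(u)$ a.e.\ in $\Omega$. Equi-integrability of $|\nabla T_k(u_n)|^p$ coming from \eqref{stimaTk} and Vitali then upgrade this pointwise convergence to strong convergence in $W^{1,p}(\Omega)$.
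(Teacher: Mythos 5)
Your parts (i) and (ii) are sound and follow essentially the paper's route: a.e.\ convergence and finiteness from \eqref{stimaTk} and the Marcinkiewicz bounds of Lemma~\ref{lem_priorigenlambda}, and $L^1(\partial\Omega)$ convergence of $\lambda_n\sigma_n(u_n)$ and $h_n(u_n)g_n$ via \eqref{stimaassorbimentogen}, the test function $V_\delta(u_n)$, the inclusion $\{u=0\}\subset\{g=0\}$ when $h(0)=\infty$, and a Vitali (rather than Scheff\'e) conclusion; this is the same substance, slightly more quantitative.

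The gap is in part (iii), precisely at the point you call ``the real difficulty''. Testing with $\phi(T_k(u_n)-T_k(u))$ \emph{without any cut-off in $u_n$} produces, on $\{u_n>k\}$ (where $\nabla T_k(u_n)=0$), the term
\begin{equation*}
-\int_{\{u_n>k\}\cap\{u<k\}} a(x,\nabla u_n)\cdot\nabla T_k(u)\,\phi'\bigl(k-T_k(u)\bigr)\,dx .
\end{equation*}
To make this vanish by pairing with $|\nabla T_k(u)|\chi_{\{u_n>k\}}\to 0$ in $L^p(\Omega)$ you would need $a(x,\nabla u_n)$ bounded in $L^{\frac{p}{p-1}}(\Omega)$ uniformly on $\{u_n>k\}$, which is exactly what is \emph{not} available: by \eqref{cara2} and the estimate $|\nabla u_n|$ bounded in $M^{\frac{N(p-1)}{N-1}}(\Omega)$, the sequence $a(x,\nabla u_n)$ is only bounded in $L^q(\Omega)$ for $q<\frac{N}{N-1}\le \frac{p}{p-1}$, and its product with the fixed function $\nabla T_k(u)\in L^p$ (not in $L^{q'}$, $q'>N$) is not uniformly integrable; the fact that $|\{u_n>k\}\cap\{u<k\}|\to 0$ does not by itself give the claimed ``$L^1$-vanishing by equi-integrability''. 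The localisation $T_{k+\varepsilon}(u_n)-T_k(u_n)$ you invoke only controls $\nabla u_n$ on the slice $\{k<u_n<k+\varepsilon\}$, while the bad term lives on all of $\{u_n>k\}\cap\{u<k\}$. The paper's device is to multiply the test function by $V_l(u_n)$ with $l>k$: then $a(x,\nabla u_n)V_l(u_n)=a(x,\nabla T_{2l}(u_n))V_l(u_n)$ \emph{is} bounded in $L^{\frac{p}{p-1}}(\Omega)$ for fixed $l$, and the extra term generated by $V_l'$ (term $(B)$ there) is killed by testing with $\phi_{l,l}(u_n)$ and letting $l\to\infty$ after $n\to\infty$. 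Two further points: the exponential weight $\phi(s)=se^{\gamma s^2}$ buys nothing here, since there is no lower-order term in $|\nabla u|^p$ to absorb; and your last step is also not justified as written, because \eqref{stimaTk} gives only an $L^p$ bound on $\nabla T_k(u_n)$, which does not imply equi-integrability of $|\nabla T_k(u_n)|^p$ — the passage from $\limsup_n\int_\Omega\bigl(a(x,\nabla T_k(u_n))-a(x,\nabla T_k(u))\bigr)\cdot\nabla\bigl(T_k(u_n)-T_k(u)\bigr)\le 0$ to strong $W^{1,p}$ convergence requires the argument of Lemma~$5$ of \cite{bmp} (or an equivalent Scheff\'e/Vitali argument on $a(x,\nabla T_k(u_n))\cdot\nabla T_k(u_n)$), which is how the paper concludes.
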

\begin{proof}
It follows from Lemma \ref{lem_priorigenlambda} that $u_n$ is bounded in $M^{\frac{N(p-1)}{N-p}}(\Omega)$ with respect to $n$ and $T_{k}(u_{n})$ is bounded in $W^{1,p}(\Omega)$ for any $k>0$, then $u_{n}$ converges (up to not relabeled subsequences) almost everywhere to a function $u$ such that $T_{k}(u)\in W^{1,p}(\Omega)$ and $u$ is almost everywhere finite in $\Omega$. Moreover, by a suitable compactness argument (in $n$) for $T_{k}(u_{n})$ on $\de\Omega$, $u_n$ converges (up to a subsequence) $\mathcal{H}^{N-1}$ almost everywhere to $u$ in $\partial\Omega$. The function $u$ is $\mathcal H^{N-1}-$a.e. finite on $\de\Omega$ since $u_n$ is bounded in $M^{\frac{(N-1)(p-1)}{N-p}}(\partial\Omega)$ as proven in Lemma \ref{lem_priorigenlambda}.

\medskip

Now observe that   \eqref{stimaassorbimentogen} implies that $\lambda_n\sigma_n(u_n)$ is equiintegrable and it converges to $\lambda\sigma(u)$ in $L^1(\partial\Omega)$ as $n\to\infty$.

\medskip

Now let us show that $h_n(u_n)g_n$ converges in $L^1(\partial\Omega)$ to $h(u)g$ as $n\to\infty$. If $h(0)<\infty$ then this is obvious; hence, without loss of generality, we assume that $h(0) =\infty$. Firstly observe that $h(u)g \in L^1(\partial \Omega)$; indeed it follows from the weak formulation of \eqref{pbapproxgen} that
$$\int_{\partial\Omega} h_n(u_n)g_n d \mathcal{H}^{N-1} \le \int_{\partial\Omega} \lambda_n \sigma_n(u_n) d \mathcal{H}^{N-1} \le C,
$$
thanks to Lemma \ref{lem_priorigenlambda}.
Then an application of the Fatou Lemma  gives $h(u)g \in L^1(\partial\Omega)$ which also means
\begin{equation}\label{uzero}
	\{u=0\}\subset \{g=0\}
	\text{ if } h(0)=\infty,
\end{equation}
up to a set of zero $\mathcal{H}^{N-1}$ measure set.

Now let us take $V_\delta(u_n)$ as a test function in the weak formulation of \eqref{pbapproxgen} yielding to
\begin{equation*}\label{convk}
		\int_{\{u_n\le \delta\}} h_n(u_n)g_n d \mathcal{H}^{N-1} \le \int_{\partial \Omega} h_n(u_n)g_n V_\delta(u_n) d \mathcal{H}^{N-1} \le  \int_{\partial\Omega} \lambda_n \sigma_n(u_n) V_\delta(u_n) \ d \mathcal{H}^{N-1},
\end{equation*}
where we dropped a non-positive term. Now one can simply take $n\to\infty$ and $\delta\to 0^+$, obtaining that
\begin{equation}\label{limkzero}
	\begin{aligned}
		\lim_{\delta\to 0^+}\limsup_{n\to\infty}\int_{\{u_n\le \delta\}} h_n(u_n)g_n d \mathcal{H}^{N-1}
		\le  \int_{{\{u=0\}}} \lambda \sigma(u) V_\delta(u) \ d \mathcal{H}^{N-1}=0,
	\end{aligned}
\end{equation}
since $\sigma(0)=0$.

Now consider $\delta\not\in \{t: |\{u=t\}|>0\}$, which is admissible since it is a countable set, and split the singular term as
\begin{equation}
	\begin{aligned}
		\label{split}
		\int_{\partial\Omega}h_n(u_n)g_n d \mathcal{H}^{N-1} &= \int_{\{u_n\leq \delta\}} h_n(u_n)g_n d \mathcal{H}^{N-1}\\
		&+ \int_{\{u_n > \delta\}} h_n(u_n)g_n d \mathcal{H}^{N-1}.
	\end{aligned}
\end{equation}
For the first term of \eqref{split} it holds \eqref{limkzero} as $n\to\infty$ and $\delta \to 0^+$.

For the second term in the right-hand of the previous one can apply the Lebesgue Theorem since
$$
h_n(u_n)g_n\chi_{\{u_n\ge \delta\}}\leq \sup_{s\in (\delta,\infty)}h(s)g \in L^1(\partial\Omega),
$$
yielding to
\begin{equation*}
	\lim_{n\to\infty} \int_{\{u_n > \delta\}} h_n(u_n)g_n d \mathcal{H}^{N-1}= \int_{\{u > \delta\}} h(u)g d \mathcal{H}^{N-1}.
\end{equation*}

Then, since $h(u)g\in L^1(\partial\Omega)$, one can apply once again the Lebesgue Theorem in order to get
\begin{equation*}
	\label{lim6}
	\lim_{\delta\to 0^+}\lim_{n\to\infty} \int_{\{u_n > \delta\}} h_n(u_n)g_n d \mathcal{H}^{N-1} = \int_{\partial\Omega} h(u)g d \mathcal{H}^{N-1},
\end{equation*}
thanks to \eqref{uzero}. Since $h_n(u_n)g_n$ is nonnegative, this is sufficient to deduce that it converges to $h(u)g$ in $L^1(\partial\Omega)$ as $n\to\infty$.

\medskip
	
Now we prove that $T_k(u_n)$ converges to $T_k(u)$ strongly in $W^{1,p}(\Omega)$ as $n\to\infty$ and for every $k>0$. Let us take $(T_k(u_n)-T_k(u))V_l(u_n)$ ($l>k$) as a test function in the weak formulation of \eqref{pbapproxgen} yielding to
\begin{equation*}
	\begin{aligned}
		&\int_\Omega (a(x,\nabla T_k(u_n)) - a(x,\nabla T_k(u))) \cdot \nabla (T_k(u_n)-T_k(u))  \\
		&= - \int_{\{k<u_n<2l\}} a(x,\nabla u_n)  \cdot \nabla (T_k(u_n)-T_k(u))  V_l(u_n) \\
		&+ \frac{1}{l}\int_{\{l<u_n<2l\}} a(x,\nabla u_n)\cdot \nabla u_n (T_k(u_n)-T_k(u))  \\
		& + \int_\Omega f_n (T_k(u_n)-T_k(u)) V_l(u_n) + \int_{\partial\Omega} h_n(u_n)g_n (T_k(u_n)-T_k(u)) V_l(u_n)  d \mathcal{H}^{N-1}\\
		& -\int_{\partial\Omega} \lambda_n\sigma_n(u_n) (T_k(u_n)-T_k(u)) V_l(u_n)  d \mathcal{H}^{N-1}
		-\int_\Omega a(x,\nabla T_k(u)) \cdot \nabla (T_k(u_n)-T_k(u))
		\\
		&=: (A)+(B)+(C)+(D)+(E) +(F).	
	\end{aligned}
\end{equation*}
For $(A)$ one has
$$(A) \le \int_\Omega|a(x,\nabla u_{n})|V_l(u_n)|\nabla T_{k}(u)|\chi_{\{u_n>k\}}.$$
Now let underline that $|a(x,\nabla u_{n})|V_l(u_n)$ is bounded in $L^{\frac{p}{p-1}}(\Omega)$ with respect to $n$ and that $\displaystyle |\nabla T_k(u)|\chi_{\{u_n>k\}}$ converges to zero in $L^p(\Omega)$ as $n\to\infty$ then one has
$$
\limsup_{n\to\infty} (A) \le 0.
$$
In order to estimate $(B)$ we take $\phi_{l,l}(u_n)$ as a test function in the weak formulation of \eqref{pbapproxgen}, yielding to
\begin{equation*}
	\frac{1}{l}\int_\Omega a(x,\nabla u_n)\cdot \nabla u_n \le \int_{\Omega}f_n\phi_{l,l}(u_n) + \int_{\partial\Omega} h_n(u_n)g_n \phi_{l,l}(u_n) d \mathcal{H}^{N-1},
\end{equation*}
which simply goes to zero as $n\to \infty$ and $l\to \infty$ since both $f_n$ and $h_n(u_n)g_n$ converges in $L^1(\Omega)$ and in $L^1(\partial\Omega)$ respectively as $n\to\infty$.
Hence one gets that
$$\lim_{l\to\infty}\limsup_{n\to\infty} (B) = 0.$$

Moreover one simply has that

$$\lim_{n\to\infty} (C) = \lim_{n\to\infty} (D) = \lim_{n\to\infty} (E) = 0$$
since $f_n$ converges in $L^1(\Omega)$, and both $h_n(u_n)g_n , \lambda_n\sigma_n(u_n)$ converge in $L^1(\partial\Omega)$ with respect to $n$.
Finally it follows from the weak convergence of $T_k(u_n)$ to $T_k(u)$ as $n\to\infty$ in $W^{1,p}(\Omega)$ that
$$\lim_{n\to\infty} \ (F) = 0.$$

\medskip

Therefore we have proven that
$$\limsup_{n\to\infty} \int_\Omega (a(x,\nabla T_k(u_n)) - a(x,\nabla T_k(u))) \cdot \nabla (T_k(u_n)-T_k(u)) =0,$$
which allows to reason as in the proof of Lemma $5$ of \cite{bmp} in order to conclude the proof.
\end{proof}

\subsection{Existence of an entropy solution}

This section is devoted to the passage to the limit in weak formulation of the approximation scheme \eqref{pbapproxgen}.

\begin{proof}[Proof of Theorem \ref{teoexgeneral}]
Let $u_n$ be a solution to \eqref{pbapproxgen}. Then it follows from Lemma \ref{lem_conv} that $u_n$ converges (up to a subsequence) almost everywhere in $\Omega$ and $\mathcal{H}^{N-1}$ almost everywhere on $\partial\Omega$ to $u$ as $n\to\infty$. Moreover $u$ is almost everywhere finite and $T_k(u) \in W^{1,p}(\Omega)$.

\medskip

Let us firstly observe that $a(x,\nabla T_k(u)) \in L^{\frac{p}{p-1}}(\Omega)^N$ since $T_k(u) \in W^{1,p}(\Omega)$ and thanks to \eqref{cara2}. Let us also note that it follows from Lemma \ref{lem_conv} that $\lambda\sigma(u), h(u)g \in L^1(\partial \Omega)$.

\medskip

Let us prove \eqref{def_ent}. We take $T_k(u_n-v)$ as a test function in the weak formulation of \eqref{pbapproxgen} where $v \in W^{1,p}(\Omega)\cap L^\infty(\Omega)$. Then one obtains
\begin{equation}
\begin{aligned}
	\label{lim1}
	&\int_{\Omega} a(x,\nabla u_n)\cdot\nabla T_k(u_n-v) + \int_{\partial\Omega} \lambda_n\sigma_n(u_n)T_k(u_n-v) d \mathcal{H}^{N-1}
	\\
	&= \int_{\Omega}f_nT_k(u_n-v) + \int_{\partial\Omega} h_n(u_n)g_nT_k(u_n-v) d \mathcal{H}^{N-1},
\end{aligned}
\end{equation}
and we want to pass to the limit \eqref{lim1} as $n\to\infty$ .
For the first term on the left hand side one can write
\begin{equation*}
	\begin{aligned}
		\int_{\Omega} a(x,\nabla u_n)\cdot\nabla T_k(u_n-v) &= \int_{\{|u_n-v|\le k\}} a(x,\nabla u_n)\cdot\nabla u_n
		\\
		&-  \int_{\{|u_n-v|\le k\}} a(x,\nabla u_n)\cdot\nabla v.
	\end{aligned}
\end{equation*}
Let firstly observe that in the previous integrals one has that $u_n\le ||v||_{L^\infty(\Omega)} + k=:M$. Then, since it follows from Lemma \ref{lem_conv} that $T_k(u_n)$ converges strongly to $T_k(u)$ in $W^{1,p}(\Omega)$ as $n\to\infty$ for any $k>0$, one has that  $a(x,\nabla T_M(u_n))$ converges strongly to $a(x,\nabla T_M(u))$ in $L^{\frac{p}{p-1}}(\Omega)^N$ as $n\to\infty$. This is sufficient to deduce that
\begin{equation*}\label{limagg}
	\begin{aligned}
		\lim_{n\to\infty} \int_{\Omega} a(x,\nabla u_n)\cdot\nabla T_k(u_n-v) &=  \int_{\{|u-v|\le k\}} a(x,\nabla u)\cdot\nabla u
		\\
		&-  \int_{\{|u-v|\le k\}} a(x,\nabla u)\cdot\nabla v
		\\
		&=  \int_{\Omega} a(x,\nabla u)\cdot\nabla T_k(u-v).
	\end{aligned}
\end{equation*}

\medskip

Moreover Lemma \ref{lem_conv} also gives that $\lambda_n\sigma_n(u_n), h_n(u_n)g_n$ converge in $L^1(\partial\Omega)$ to $\lambda\sigma(u)$ and $h(u)g$ as $n\to\infty$. This is sufficient to take $n\to\infty$ in the second and in the fourth term of \eqref{lim1}.
The first term on the right-hand simply passes to the limit as $n\to\infty$. This concludes the proof.

\end{proof}

\subsection{Proof of the uniqueness result}
In this section we prove Theorem \ref{teouniqueent}.
\begin{proof}[Proof 	of Theorem \ref{teouniqueent}]
Let $u_1$ and $u_2$ be entropy solutions to problem \eqref{pbmaingeneral} and let us take $v=T_m(u_2)$ in the entropy formulation corresponding to $u_1$ and $v=T_m(u_1)$ in that of $u_2$. Adding up both identities, it leads to
\begin{equation}\label{punto}
	\begin{aligned}
	&\int_{\{|u_1-T_m(u_2)|<k\}}\!\!a(x,\nabla u_1)\cdot \nabla (u_1-T_m(u_2))
	\\
	&+
	\int_{\{|u_2-T_m(u_1)|<k\}}\!\!a(x,\nabla u_2)\cdot \nabla (u_2-T_m(u_1))
	\\
	&+\int_{\partial\Omega}\lambda \sigma(u_1)T_k(u_1-T_m(u_2)) d \mathcal{H}^{N-1}+
	\int_{\partial\Omega}\lambda \sigma(u_2)T_k(u_2-T_m(u_1)) d \mathcal{H}^{N-1}
	\\
	&=\int_\Omega f\big( T_k(u_1-T_m(u_2))+T_k(u_2-T_m(u_1))\big)
	\\
	&+\int_{\partial\Omega}h(u_1) g T_k(u_1-T_m(u_2)) d \mathcal{H}^{N-1} +\int_{\partial\Omega}h(u_2) g T_k(u_2-T_m(u_1)) d \mathcal{H}^{N-1}
	\end{aligned}	
\end{equation}
We let $m\to\infty$ in \eqref{punto}.

For the first two terms of \eqref{punto} one can reason as in Theorem $5.1$ of \cite{b6}, deducing that its liminf as $m\to \infty$ is bigger than
\[\int_{\{|u_1-u_2|<k\}} \big(a(x,\nabla u_1)-a(x,\nabla u_2)\big)\cdot \nabla (u_1-u_2).\]

It is easy to handle the other terms thanks to Lebesgue's Theorem. Indeed,
\begin{equation}
	\label{unique}
	\begin{aligned}
	\lim_{m\to\infty}&\int_{\partial\Omega}\lambda \sigma(u_1)T_k(u_1-T_m(u_2)) d \mathcal{H}^{N-1} +
	\int_{\partial\Omega}\lambda \sigma(u_2)T_k(u_2-T_m(u_1)) d \mathcal{H}^{N-1}
	\\
	&=\int_{\partial\Omega}\lambda\big( \sigma(u_1)-\sigma(u_2)\big)T_k(u_1-u_2) d \mathcal{H}^{N-1} \ge0
	\end{aligned}	
\end{equation}
since $\sigma$ is an increasing function. Moreover,
\[
\lim_{m\to\infty}\int_\Omega f\big( T_k(u_1-T_m(u_2))+T_k(u_2-T_m(u_1))\big)
=0
\]
and
\begin{equation*}
	\begin{aligned}
	\lim_{m\to\infty}&\int_{\partial\Omega}h(u_1) g T_k(u_1-T_m(u_2)) d \mathcal{H}^{N-1}+\int_{\partial\Omega}h(u_2) g T_k(u_2-T_m(u_1)) d \mathcal{H}^{N-1}
	\\
	&=\int_{\partial\Omega} g \big(k(u_1)-k(u_2)\big)T_k(u_1-u_2) d \mathcal{H}^{N-1}\le 0
	\end{aligned}	
\end{equation*}
since $k$ is nonincreasing. Therefore, identity \eqref{punto} becomes
\[\int_{\{|u_1-u_2|<k\}} \big(a(x,\nabla u_1)-a(x,\nabla u_2)\big)\cdot \nabla (u_1-u_2) + \int_{\partial\Omega}\lambda\big( \sigma(u_1)-\sigma(u_2)\big)T_k(u_1-u_2) d \mathcal{H}^{N-1} \le0.\]
Now the proof concludes by observing that it follows from \eqref{cara3} and \eqref{unique} that both terms of the previous are zero. This means that $\nabla u_1 = \nabla u_2$ almost everywhere in $\Omega$. Moreover the previous implies that $\lambda u_1 = \lambda u_2$ $\mathcal{H}^{N-1}$ almost everywhere on $\partial\Omega$ since $\sigma(s)$ is increasing in $s$.

Then one has that, for all $k>0$,  $\|T_k(u_1)-T_k(u_2)\|_{\lambda,p} = 0$ and we conclude that
 $u_1=u_2$ almost everywhere in $\Omega$ and $\mathcal{H}^{N-1}$ almost everywhere on $\partial\Omega$.
\end{proof}

\section*{Funding}

The first author is partially supported by the  MIUR-PRIN 2017 grant ``Qualitative and quantitative aspects of nonlinear PDE's'', by GNAMPA of INdAM, by  the FRA Project (Compagnia di San Paolo and Universit\`a degli studi di Napoli Federico II) \verb|000022--ALTRI_CDA_75_2021_FRA_PASSARELLI|.

The second author is partially supported by GNAMPA of INdAM.

The third author is partially supported by CIUCSD (Generalitat Valenciana) under project AICO/2021/223 and by Red de Ecuaciones en Derivadas Parciales no Locales y Aplicaciones of MCI (Spanish) under project RED2022-134784-T.

\end{document}